\newtheorem{theorem}{Theorem}
\newtheorem*{main*}{Main Theorem}
\newtheorem{conjecture}{Conjecture}
\newtheorem{corollary}{Corollary}
\newtheorem{lemma}{Lemma}
\theoremstyle{definition}
\newtheorem{remark}{Remark}
\newcommand{\norm}[1]{\left\Vert#1\right\Vert}
\DeclareMathOperator{\ad}{ad}
\DeclareMathOperator{\supp}{supp}
\DeclareMathOperator{\vol}{vol}
\newcommand{\Diff}{\mathrm{Diff}}
\newcommand{\e}{e}
\newcommand{\id}{\mathrm{id}}
\newcommand{\ex}{{\operatorname{ex}}}
\newcommand{\llangle}{\langle\!\langle}
\newcommand{\rrangle}{\rangle\!\rangle}
\begin{document}

\title{Vanishing distance phenomena and the geometric approach to SQG}
\author{Martin Bauer}
\address{Department of Mathematics, Florida State University}
\email{bauer@math.fsu.edu}
\author{Philipp Harms}
\address{Department of Mathematics, Albert Ludwig University of Freiburg}
\email{philipp.harms@stochastik.uni-freiburg.de}
\author{Stephen C. Preston}
\address{Department of Mathematics, Brooklyn College of City University New York}
\email{stephen.preston@brooklyn.cuny.edu}
\subjclass[2010]{Primary: 58D05, Secondary: 58E10}
\keywords{SQG equation; vanishing geodesic distance; infinite dimensional geometry; Sobolev metrics; diffeomorphism groups; Euler-Arnold equations}
\thanks{MB was partially supported by a first year assistant professor award of the Florida State University. PH was supported by the Freiburg Institute of Advances Studies in the form of a Junior Fellowship. SCP was partially supported by Simons Foundation Collaboration Grant for Mathematicians no. 318969. SCP was also supported by a PSC-CUNY Award, jointly funded by The Professional Staff Congress and The City University of New York.}
\date{\today}

\begin{abstract}
In this article we study the induced geodesic distance of fractional order Sobolev metrics on the groups of (volume preserving) diffeomorphisms and symplectomorphisms. The interest in these geometries is fueled by the observation that they allow for a geometric interpretation for prominent partial differential equations in the field of fluid dynamics. These include in particular the modified Constantin--Lax--Majda and surface quasi-geostrophic equations. The main result of this article shows that both of these equations stem from a Riemannian metric with vanishing geodesic distance.
\end{abstract}

\maketitle

\tableofcontents

\section{Introduction}

It has been recently shown~\cite{Was2016} that the surface quasi-geostrophic (SQG) equation
\begin{equation*}
\theta_t+\langle u,\nabla \theta\rangle =0,\qquad
\theta=\nabla \times (-\Delta)^{-1/2}u
\end{equation*}
admits a geometric interpretation as the Euler--Arnold equation for geodesics of a right-invariant $H^{-1/2}$ metric on the group of diffeomorphisms which preserve the volume form of a two-manifold.
Recall that geodesics are critical points of the path length functional, and that the geodesic distance is the infimal length of paths between two given points.
In the article~\cite{Was2016} Washabaugh conjectured that the geodesic distance of the right-invariant $H^{-1/2}$ metric on the group of volume preserving diffeomorphisms is degenerate, i.e., there are distinct volume preserving diffeomorphisms whose geodesic distance is zero.
The main result of this article gives an affirmative answer to this conjecture:

\begin{main*}
Let $M$ be a two-dimensional orientable manifold with Riemannian metric $g$ and volume form $\mu=\vol(g)$, and let $\Diff_{\mu}(M)$ denote the group of all diffeomorphisms $\varphi$ satisfying $\varphi^*\mu=\mu$. Then the geodesic distance of the right-invariant $H^{-1/2}$ metric on $\Diff_\mu(M)$ is degenerate.
\end{main*}

This result is proven in Corollary~\ref{cor:volume} using the more general Theorem~\ref{thm:maintheorem}. We next discuss the relevance of this result in the broader context of the study of partial differential equations by geometric methods.

\subsection{A geometric view on partial differential equations}
Washabaugh's work stands in the tradition of studying partial differential equations (PDEs) from a geometric perspective by representing them as related to geodesic equations under suitable metrics. Generally on a Lie group $G$ with right-invariant metric, the geodesic equation 
splits into the decoupled pair of equations
\begin{equation*}
\partial_t g(t) = dR_{g(t)}u(t)\;, \qquad \partial_t u(t) + \ad_{u(t)}^{\star}u(t) = 0\;, \qquad g(t) \in G\;, u(t) \in T_eG\;;
\end{equation*}
here the first equation is the flow equation, while the second is called the \emph{Euler--Arnold equation}. 
This program for PDEs was started by Arnold \cite{Arn1966}, who represented Euler's equation of hydrodynamics as the Euler--Arnold equation of the right-invariant $L^2$ metric on the group of volume preserving diffeomorphisms.
Subsequently, similar representations were found for many other important PDEs in hydrodynamics and physics, including the modified Constantin--Lax--Majda equation \cite{CLM1985,Wun2010,EKW2012,BKP2016}, the Camassa--Holm equation \cite{CH1993,Mis1998,Kou1999}, the Korteweg--de Vries equation \cite{KO1987}, and the Hunter--Saxton equation \cite{HS1991,KM2003,Len2007,Len2008} (see \cite{Viz2008,Kol2017,KLMP2013} for surveys and further examples).
These representations allow one to study properties of the PDE in relation to properties of the underlying Riemannian manifold.
For example, local well-posedness of the PDE, including continuous dependence on initial conditions, is closely related to smoothness of the geodesic spray~\cite{EM1970}; see \cite{MP2010} for further results on smoothness for other Euler--Arnold equations.
Further geometric properties which have been studied in this context are the sign of the sectional curvature~\cite{KLMP2013}, Fredholmness of the exponential map~\cite{MP2010} and, as in this article, degeneracy of the geodesic distance functional \cite{MM2005}.

\subsection{Degeneracy of the geodesic distance on diffeomorphism groups}
\label{sec:intro:degeneracy}

One of the best-known instances of this phenomenon was discovered by Eliashberg and Polterovich \cite{ElPo1993}, who showed the degeneracy of the geodesic distance of the \emph{bi-invariant} $W^{-1,p}$ metric with $p<\infty$ on the group of symplectomorphisms.
This is in contrast to Hofer's $W^{-1,\infty}$ metric, which has non-degenerate geodesic distance \cite{hofer1990topological}.
More than ten years later, Michor and Mumford \cite{MM2005} proved that the geodesic distance of the \emph{right-invariant} $L^{2}$ metric on the group of diffeomorphisms vanishes identically. Here the corresponding Euler--Arnold equation is the inviscid Burgers' equation $u_t+3uu_x=0$. Subsequently, Bauer, Bruveris, Harms and Michor \cite{BBHM2012,BBHM2013,BBM2013} extended this result to fractional order Sobolev metrics of order $s< 1/2$ on general diffeomorphism groups and for $s=\frac12$ on the diffeomorphism group of the circle, and to the $L^2$ metric on the Virasoro--Bott group, whose Euler--Arnold equation is the Korteweg--de Vries equation. While the reasons for this degeneracy are still mysterious, it has been conjectured by Michor and Mumford \cite{MM2005} that there exists a relation  to locally unbounded curvature of the corresponding Riemannian metric.

In many cases the (non-)degeneracy of the geodesic distance goes hand in hand with Fredholmness of the exponential map and well-posedness properties of the geodesic equation and Euler--Arnold equation. Sobolev metrics on diffeomorphism groups depend on a smoothness parameter $s$, the number of derivatives of the vector field that appear in the metric at the identity, and the higher this parameter is, the better-behaved geodesics are. For right-invariant Sobolev metrics of fractional order on the diffeomorphism group of a one-dimensional manifold, we summarize the known geometric properties in Table~\ref{tab:1} below: smoothness of the exponential map $u_0\mapsto g(1)$, Fredholmness of this map, global existence of geodesics, and nonvanishing geodesic distance.

\begin{table}[h]
\centering
\begin{tabular}{ccccccccc}
\toprule
$s$ & $0\leq s<\tfrac{1}{2}$ & $s=\tfrac{1}{2}$ & $\tfrac{1}{2}<s<\tfrac{3}{2}$ & $s> \tfrac{3}{2}$ \\ \midrule
smoothness & False \cite{CK2002} & True \cite{EKW2012,CK2002} & True \cite{EK2014a} & True  \cite{EK2014a} \\ 
Fredholmness & False & False \cite{BKP2016} & True\footnotemark{} \cite{MP2010} & True\addtocounter{footnote}{-1}\footnotemark{} \cite{MP2010} \\ 
global exist. & False for $s=0$ & False \cite{CC2010,BKP2016,PW2018} & False for $s=1$ \cite{CH1993, McK1998, CE1998b} & True \cite{PW2018,EK2014a} \\ 
nonvanishing & False \cite{BBHM2013} & {\bf False} \cite{BBHM2013} & True \cite{BBHM2013} & True \cite{BBHM2013}
\\\bottomrule
\end{tabular}
\caption{Geometric properties of $H^s$ metrics on $\Diff(S^1)$ and $\Diff(\mathbb R)$. For the case $\Diff(\mathbb R)$ the bold statement is a new contribution of this article.}
\label{tab:1}
\end{table}
\footnotetext{\label{foot:1}The arguments in \cite{MP2010} for Fredholmness can be extended to fractional orders, assuming that smoothness of the metric and spray holds true.}

Clearly the case $s=\tfrac{1}{2}$ is the transition for most of these properties, which suggests that there are some connections between them. 
Global existence is known only for orders $s\in\{0,\frac12,1\}$, where it fails, and for orders $s>\tfrac{3}{2}$, where global existence holds almost trivially because the Riemannian distance generates the manifold topology, and standard results of Riemannian geometry on Hilbert manifolds apply \cite{Lan1999}. 

For diffeomorphism groups on higher dimensional manifolds the critical indices for Fredholmness and smoothness of the exponential map do not change, whereas the critical indices for vanishing geodesic distance and global existence depend on the dimension. Vanishing geodesic distance for $\frac12\leq s<1$ is an extremely recent result by Jerrard and Maor \cite{JM2018}, who disproved an earlier conjecture by Bauer, Bruveris, Harms, and Michor \cite{BBHM2013}. We again summarize the known geometric properties in Table~\ref{tab:1a} below:
\begin{table}[h]
\centering
\begin{tabular}{ccccccccc}
\toprule
$s$ & $0\leq s<\tfrac{1}{2}$ & $s=\tfrac{1}{2}$ & $\tfrac{1}{2}<s<1$ & $s\geq 1$ \\ \midrule
smoothness & False & True\footnotemark{} \cite{BEK2014} & True \cite{BEK2014} & True  \cite{BEK2014} \\ 
Fredholmness & False & False & True \cite{MP2010} & True \cite{MP2010} \\ 
global exist. & False for $s=0$ & False  & False for $s=1$ & True for $s>\frac{d}{2}+1$ \cite{BEK2014,BV2017,MP2010} \\ 
nonvanishing & False \cite{BBHM2013} & False \cite{JM2018} & False \cite{JM2018} & True \cite{MM2005}
\\\bottomrule
\end{tabular}
\caption{Geometric properties of $H^s$ metrics on $\Diff(M)$ for a manifold $M$ of dimension $d\geq 2$.}
\label{tab:1a}
\end{table}
\footnotetext{For fractional order metrics  smoothness and global existence results have only been shown for the case $M=\mathbb R^d$.}

For the volume-preserving diffeomorphism group of a simply-connected compact two-dimensional surface $M$, the critical exponents change again, as can be seen from Table~\ref{tab:2} below. The geodesic distance was previously known to be nondegenerate for $s=0$ (corresponding to ordinary 2D Euler) and degenerate for $s=-1$. This paper completes the picture for $s\le -\tfrac{1}{2}$. For the interval $-\tfrac{1}{2}<s<0$, the answer is still unknown. 

\begin{table}[h]
\begin{tabular}{cccccccccc}
\toprule
$s$ &$s=-1$& $-1<s<-\tfrac{1}{2}$ & $s=-\tfrac{1}{2}$ & $-\tfrac{1}{2}<s$ \\ \midrule
smoothness & False & False & True \cite{Was2016} & True \cite{MP2010} \\ 
Fredholmness & False & False & False \cite{Was2016} & True \cite{EMP2006, MP2010} \\ 
global exist. & True\footnotemark & unknown & unknown & True for $s=0$ \cite{Wol1933} and $s=1$ \cite{Shk2001} \\ 
nonvanishing & False \cite{ElPo1993} & \textbf{False} & \textbf{False} & True for $s\ge 0$ \cite{MM2005}
\\\bottomrule
\end{tabular}
\caption{Geometric properties of $H^s$ metrics on $\Diff_\mu(M)$, where $M$ is a closed surface. The bold statements are new contributions of this article.}
\label{tab:2}
\end{table}

\footnotetext{Global existence for $s=-1$ holds for an entirely different reason (bi-invariance of the metric, which implies that the Riemannian exponential coincides with the group exponential) than for metrics of order $s\geq 0$ (PDE methods, which imply in addition the smoothness of the exponential map).}

In higher dimensions the critical exponents changes again. In dimension 3 it becomes $s=0$ for both Fredholmess and vanishing, corresponding to the usual 3D Euler equation. Here the exponential map is smooth~\cite{EM1970} but not Fredholm~\cite{EMP2006}, while global existence is notoriously unknown. The geodesic distance is positive, but not due to an intrinsic property of the metric: rather due to the fact that the right-invariant metric happens to be the restriction of the non-invariant metric, for which the geodesics are known explicitly and given by pointwise geodesics in the base manifold $M$~\cite{EM1970}. The completion of the smooth volume-preserving diffeomorphisms in the Riemannian distance is the space of all measure-preserving maps, a result in dimension 3 and higher due to Shnirelman~\cite{Shn1987}. Intuitively we may think of the volume-preserving constraint as doing very little to enforce smoothness in dimension $3$ or higher; on the other hand in two dimensions the completion is smaller (though it is not known exactly what it is). The fact that there are smooth volume-preserving diffeomorphisms in a 3D cube which cannot be joined by a minimizing geodesic, and that the diameter of this group is finite~\cite{Shn1994} is further evidence that for 3D fluids, the distance is ``nearly'' degenerate. From the tables above, we may suspect that these geometric properties are related to each other and to the global existence question, though as yet no direct implication is known. 

\subsection{Relation to degeneracy of the displacement energy}

This article simplifies and unifies the methods which were used by Michor, Mumford, Bauer, and Bruveris \cite{MM2005,BBHM2012,BBHM2013,BBM2013} to prove degeneracy of the geodesic distance on diffeomorphism groups.
One key insight is the observation that an argument of Eliashberg and Polterovich~\cite{ElPo1993}, which links degeneracy of the geodesic distance to degeneracy of the displacement energy, generalizes from bi-invariant to right-invariant metrics; see Theorem~\ref{thm:displacement}.
This significantly widens the applicability of \cite{ElPo1993}, as it allows us to study the large class of right-invariant Sobolev metrics on diffeomorphism groups. 
In the context of $W^{-1,p}$-norms on the contactomorphism group this has been observed by Shelukhin \cite[Remark~7]{shelukhin2017hofer}. 
We present a formal proof of this result in the context of general groups of transformations.

These results circumvent the main difficulty in the proofs of vanishing geodesic distance of \cite{MM2005,BBHM2012,BBHM2013,BBM2013,JM2018}, namely, to construct short paths of diffeomorphisms with fixed end points.
In contrast, there is no end point constraint in the definition of the displacement energy, other than that some fixed set of points has to be mapped to some disjoint location.
This is much easier to handle.

\subsection{Application to Sobolev metrics on diffeomorphism groups}

We show that the geodesic distance of the $H^{1/2}$ metric on diffeomorphism groups vanishes identically; see Theorem~\ref{thm:vanishingDiff}.
The corresponding Euler--Arnold equation is the Wunsch (modified Constantin--Lax--Majda) equation \cite{CLM1985,Wun2010,EKW2012,BKP2016}.
Moreover, we show that the geodesic distance of the $H^{-1/2}$ metric on groups of exact diffeomorphisms vanishes identically; see Theorem~\ref{thm:maintheorem}.
This implies the degeneracy of the geodesic distance on groups of volume preserving diffeomorphisms on two-manifolds; see Corollary~\ref{cor:volume}.
The corresponding Euler--Arnold equation is the SQG equation.
We conjecture that these results are sharp, referring to Section \ref{sec:conjecture2} for precise statements.

\subsection{Structure of the article}

Section~\ref{sec:displacement} contains the characterization of the degeneracy of the geodesic distance in terms of the displacement energy. Sections~\ref{sec:diffeomorphisms} and~\ref{sec:volumepreserving} contain applications of this theorem to groups of diffeomorphisms and volume preserving diffeomorphisms, respectively.

\section{Right-invariant Riemannian metrics on Lie groups}\label{sec:displacement}
In this section we establish a necessary and sufficient condition for the (non-)degeneracy of the geodesic distance on infinite-dimensional  groups with right-invariant weak Riemannian metrics.
This setting is natural for the study of diffeomorphism groups and other infinite-dimensional topological groups; see the applications in Sections~\ref{sec:diffeomorphisms}--\ref{sec:volumepreserving}.

\subsection{Geodesic distance}\label{sec:geo}

Let $G$ be a (possibly infinite dimensional) manifold and topological group with neutral element $\e$, Lie algebra $\mathfrak g=T_eG$, and left and right translations $L$ and $R$ given by
\begin{equation}
g_1 g_2= L_{g_1}(g_2)=R_{g_2}(g_1),\;\qquad \forall g_1,g_2\in G\;.	
\end{equation}
Assume for each $g\in G$ that $R_g\colon G\to G$ is smooth, and let $\llangle \cdot,\cdot\rrangle$ be an inner product on the Lie algebra $\mathfrak g$.
This gives rise to the following right-invariant Riemannian metric on $G$:
\begin{equation}
\llangle h_1,h_2 \rrangle_g = \llangle TR_{g^{-1}}h_1,TR_{g^{-1}}h_2\rrangle,\qquad \forall g\in G,\; \forall h_1,h_2\in T_g G\;.
\end{equation}
The corresponding geodesic distance function is defined as
\begin{align}
d(g_1,g_1)={\operatorname{inf}}   \int_0^1 \llangle \partial_t g(t),\partial_t g(t) \rrangle_{g(t)} dt\;,\;\qquad \forall g_1,g_2\in G\;,
\end{align}
where the infimum is taken over all smooth paths in $G$ with $g(0)=g_0$ and $g(1)=g_1$.
The geodesic distance function is called degenerate if $d(g_1,g_2)=0$ for some $g_1\neq g_2 \in G$, and it is called vanishing if $d(g_1,g_2)=0$ for all $g_1, g_2 \in G$.

\subsection{Displacement energy}\label{sec:dis}

Assume the setting of Section~\ref{sec:geo}, and let $G$ act effectively and continuously from the left on a set $M$.
Then the displacement energy \cite{ElPo1993} of a subset $A\subseteq M$ is defined as
\begin{align}
E(A)=\inf\left\{d(\e,g):g\in G, g(A)\cap A=\emptyset \right\}\;,
\end{align}
the support of a transformation $g \in G$ is defined as
\begin{align}
\supp(g)=\left\{x\in M: g(x)\neq x \right\},
\end{align}
and the group of transformations with support in $A \subseteq M$ is denoted by \cite{ElPo1993}
\begin{align}
G_A=\left\{g\in G: \operatorname{supp}(g)\subset A \right\}.
\end{align}
A subset $A\subseteq M$ is called essential if the corresponding group $G_A$ is non-Abelian,
and a transformation $g \in G$ is called non-trivial if $g\neq e$.

\subsection{Relation between geodesic distance and displacement energy}\label{sec:rel}
On finite-dimensional manifolds and, more generally, manifolds with strong Riemannian metrics, the geodesic distance is always non-degenerate \cite{Lan1993}.
For weak Riemannian metrics this is no longer true: there exists Riemannian metrics that induce vanishing geodesic distance \cite{ElPo1993,MM2005}.
In this section we will describe an equivalence between this degeneracy of the geodesic distance and degeneracy of the displacement energy. This result is a generalization of a result  by Eliashberg and Polterovich \cite{ElPo1993} for 
the group of symplectomorphisms with bi-invariant weak Riemannian metric.

The scarcity of bi-invariant metrics in the context of infinite dimensional Lie-groups limits the applicability of their result. Theorem~\ref{thm:displacement} shows that left-invariance is not needed and can be replaced by condition~\eqref{ass:lipschitz}, which holds automatically for all bi-invariant metrics (in this case, the constant $|L_g|$ is equal to one). In the context of 
the contactomorphism group this result has been already observed by Shelukhin in  \cite{shelukhin2017hofer}. In the following we will formulate the result for a general group of transformations acting on a set $M$. The proof follows the sketch of Shelukhin, which is based on an adaption of the original argument by Eliashberg and Polterovich, see \cite{shelukhin2017hofer,ElPo1993}. 

\begin{theorem}\label{thm:displacement}
Assume the setting of Sections~\ref{sec:geo}-\ref{sec:dis}, and assume for each $g \in G$ that left translation by $g$ is Lipschitz continuous:
\begin{equation}\label{ass:lipschitz}
|L_g| := \inf\left\{C \in \mathbb R_+: d(gg_0,gg_1) \leq C d(g_0,g_1),  \forall g_0,g_1 \in G \right\} <\infty\;.
\end{equation}
Furthermore assume that every non-empty, open subset $A\subset M$ is essential.

Then the following three statements are equivalent:
\begin{enumerate}
\item\label{item:displacement1} There exists a non-trivial transformation $G\ni g \neq e$ with $d(\e,g)=0$.
\item\label{item:displacement2} There exists a normal subgroup of transformations $g \in G$ with $d(\e,g)=0$ which contains at least one non-trivial transformation $g\neq e$.
\item\label{item:displacement3} There exists an open set $A\subseteq M$ with displacement energy $E(A)=0$.
\end{enumerate}
If $G$ is a simple group then any of the above statements imply
\begin{enumerate}[resume]
\item\label{item:displacement4} The geodesic distance function vanishes identically, i.e., $d(g_1,g_2)=0$ for all $g_1,g_2\in G$.
\end{enumerate}
\end{theorem}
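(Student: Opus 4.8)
The plan is to reduce the entire theorem to three elementary properties of the length function $\rho(g) := d(\e,g)$. First I would record that right-invariance alone gives $\rho(g^{-1}) = \rho(g)$ together with subadditivity $\rho(g_1 g_2) \le \rho(g_1) + \rho(g_2)$: the triangle inequality yields $\rho(g_1 g_2) \le \rho(g_2) + d(g_2, g_1 g_2)$, and cancelling the common right factor $g_2$ gives $d(g_2, g_1 g_2) = d(\e, g_1) = \rho(g_1)$. The Lipschitz hypothesis~\eqref{ass:lipschitz} then supplies a conjugation bound: right-invariance turns $\rho(f g f^{-1}) = d(\e, fgf^{-1})$ into $d(f, fg)$, which left-Lipschitz controls by $|L_f|\,\rho(g)$. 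From these three facts the set $\mathcal N := \{g \in G : \rho(g) = 0\}$ is seen to be a normal subgroup. This already settles most of the theorem: \eqref{item:displacement2}$\Rightarrow$\eqref{item:displacement1} is trivial, \eqref{item:displacement1}$\Rightarrow$\eqref{item:displacement2} is witnessed by $\mathcal N$ itself, and under simplicity \eqref{item:displacement1} forces $\mathcal N \neq \{\e\}$, hence $\mathcal N = G$, so that $d(g_1,g_2) = \rho(g_2 g_1^{-1}) = 0$ for all $g_1, g_2$, which is \eqref{item:displacement4}.

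For \eqref{item:displacement1}$\Rightarrow$\eqref{item:displacement3} I would take the non-trivial $g_0$ with $\rho(g_0) = 0$ and use effectiveness to find a point $x$ with $g_0(x) \neq x$; continuity of the action then produces an open neighbourhood $A$ of $x$ with $g_0(A) \cap A = \emptyset$, whence $E(A) \le \rho(g_0) = 0$.

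The heart of the argument is \eqref{item:displacement3}$\Rightarrow$\eqref{item:displacement1}, the Eliashberg--Polterovich commutator trick. Since $A$ is essential, $G_A$ is non-Abelian, so I can fix $f, h \in G_A$ with $[f,h] \neq \e$. For any $g$ displacing $A$, the element $\bar f := g f g^{-1}$ is supported in $g(A)$, which is disjoint from $A \supseteq \supp(h)$; hence $\bar f$ and $h$ commute, and a short computation gives the identity $[f,h] = [f\bar f^{-1}, h]$ with $f\bar f^{-1} = [f,g]$. I would then estimate $\rho([f,h])$ by applying subadditivity and the conjugation bound to the two nested commutators, obtaining
\begin{equation*}
\rho([f,h]) \;\le\; (1+|L_f|)(1+|L_h|)\,\rho(g).
\end{equation*}
Taking the infimum over all displacing $g$ yields $\rho([f,h]) \le (1+|L_f|)(1+|L_h|)\,E(A) = 0$, so $[f,h]$ is the desired non-trivial transformation at zero distance.

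The step I expect to be the main obstacle is organizing this last estimate so that the constant depends only on the fixed elements $f,h$ and not on the varying displacing element $g$. The key observation that makes it work is that subadditivity of $\rho$ requires only right-invariance, with no Lipschitz factor; consequently the constants $|L_f|,|L_h|$ enter exclusively through conjugation by the \emph{fixed} elements $f$ and $h$, so no control over $|L_g|$ for the displacing elements — and in particular no limiting or continuity argument as $\rho(g)\to 0$ — is needed.
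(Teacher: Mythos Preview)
Your argument is correct and follows the same route as the paper. One small point in your write-up of \ref{item:displacement3}$\Rightarrow$\ref{item:displacement1}: commuting $\bar f$ with $h$ alone yields only $[f\bar f^{-1},h]=\bar f\,[f,h]\,\bar f^{-1}$, and to drop the outer conjugation you also need that $\bar f$ commutes with $[f,h]$---which holds for the same support reason, since $f,h\in G_A$ forces $[f,h]\in G_A$; the paper instead verifies the equivalent identity $g_0^{-1}g_1g_0=g_3^{-1}g_1g_3$ by a three-case pointwise check on $M$, so your formulation is actually a bit cleaner. (Cosmetically, your displayed constant should be $|L_{h^{-1}}|$ rather than $|L_h|$, and your commutator convention $[a,b]=aba^{-1}b^{-1}$ differs from the paper's $a^{-1}b^{-1}ab$; neither affects the argument.)
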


\begin{proof}
For brevity, we write $\norm{h}_g=\sqrt{\llangle h,h\rrangle_g}$ for all $g \in G$ and $h \in T_gG$.
One easily verifies that the geodesic distance is symmetric and satisfies the triangle inequality, i.e.,
\begin{equation*}
d(g_2,g_1)=d(g_1,g_2)\leq d(g_1,g)+d(g,g_2),
\qquad
\forall g,g_1,g_2 \in G.
\end{equation*}
Moreover, the invariance properties of the metric imply that
\begin{align*}
d(g_1g,g_2g)=d(g_1,g_2),
\qquad
d(gg_1,gg_2)\leq |L_g| d(g_1,g_2),
\qquad
\forall g,g_1,g_2 \in G.
\end{align*}

\noindent\ref{item:displacement1} $\implies$ \ref{item:displacement2}:
Let $G^0$ be the set of all transformations $g\in G$ with $d(\e,g)=0$.
Then $G^0$ is a subgroup of $G$ because it holds for each $g_1, g_2 \in G^0$ that
\begin{align*}
d(\e,g_1g_2^{-1})\leq d(\e,g_2^{-1})+d(g_2^{-1},g_1g_2^{-1})=d(g_2,\e)+d(\e,g_1)=0.
\end{align*}
Moreover, $G^0$ is a normal subgroup of $G$ because it holds for all $g_0 \in G^0$ and $g \in G$ that
\begin{equation*}
d(\e,gg_0g^{-1})=d(g,gg_0)\leq |L_g| d(\e,g_0)=0.
\end{equation*}
Thus, $G_0$ is a normal subgroup of $G$, which contains a non-trivial transformation by \ref{item:displacement1}, and we have shown \ref{item:displacement2}.

\noindent
\ref{item:displacement2} $\implies$ \ref{item:displacement3}: Let $g$ be an non-trivial transformation in $G^0$. As $g$ is non-trivial, there exists an open set $A\subseteq M$ such that $g(A)\cap A=\emptyset$ (recall that we assumed the action of $G$ to be continuous). Together with $g\in G^0$ this implies $E(A)=0$, which proves \ref{item:displacement3}.

\noindent
\ref{item:displacement3} $\implies$ \ref{item:displacement1}: This generalizes the proof for bi-invariant metrics in \cite{ElPo1993} and is similar to the proof described in \cite{shelukhin2017hofer}. The main ingredient is the following estimate for the distance of the commutator $[g_0,g_1]:=g_0^{-1}g_1^{-1}g_0g_1$ of $g_0,g_1 \in G$ to the neutral element:
\begin{equation}\label{ineq:commutator}\begin{aligned}
d(\e,[g_0,g_1])
&\leq
\operatorname{min}\left((1+|L_{g_0^{-1}}|) d(\e,g_1),
(1+|L_{g_1^{-1}}|) d(\e,g_0)\right)\;.
\end{aligned}\end{equation}
Note that \eqref{ineq:commutator} is trivially satisfied for bi-invariant metrics \cite{ElPo1993}. To prove \eqref{ineq:commutator} we calculate
\begin{equation}\begin{aligned}
d(\e,[g_0,g_1])
&=
d(\e,g_0^{-1}g_1^{-1}g_0g_1)
=
d(g_1^{-1}g_0^{-1} ,g_0^{-1}g_1^{-1})
\leq
d(g_1^{-1}g_0^{-1},g_0^{-1})+ d(g_0^{-1},g_0^{-1}g_1^{-1} )
\\&\leq
d(g_1^{-1},\e)+ |L_{g_0^{-1}}| d(\e,g_1^{-1} )
=
(1+|L_{g_0^{-1}}|) d(\e,g_1),
\end{aligned}\end{equation}
and
\begin{equation}\begin{aligned}
d(\e,[g_0,g_1])
&=
d(\e,g_0^{-1}g_1^{-1}g_0g_1)
=
d(g_1^{-1}g_0^{-1} ,g_0^{-1}g_1^{-1})
\leq
d(g_1^{-1}g_0^{-1},g_1^{-1})+ d(g_1^{-1},g_0^{-1}g_1^{-1} )
\\&\leq
|L_{g_1^{-1}}| d(\e,g_0^{-1} )
+d(g_0^{-1},\e)
=
(1+|L_{g_1^{-1}}|) d(\e,g_0).
\end{aligned}\end{equation}
Inequality \eqref{ineq:commutator} allows us to reuse the proof of \cite{ElPo1993} to show the degeneracy of the metric.
Therefore let $A$ be a non-empty, open set with zero displacement energy.
As $G_A$ is non-Abelian, we can choose $g_0,g_1 \in G_A$ with $[g_0, g_1]\neq\e$.
For any $g_2 \in G$ with $g_2(A)\cap A=\emptyset$ we let $g_3= g_0g_2^{-1}g_0^{-1}g_2=[g_0^{-1},g_2]$.
Then it holds for all $x \in M$ that
\begin{equation}\label{equ:g0g1g3}
g_0^{-1}g_1g_0x=g_3^{-1}g_1g_3x\;.
\end{equation}
For $x\in A$ this is obvious because $g_3=g_0$ on $A$.
For $x\notin A$ and $g_2(x)\notin A$ we have $g_3(x)= x=g_0(x)$ and thus it is true as well.
It remains to check the case $x\notin A$ and $g_2(x)\in A$. Then $g_3(x)= g_2^{-1}g_0^{-1}g_2(x)\notin A$.
Here we used  that $g_2^{-1}(A)\cap A=\emptyset$ and that $g_0(x)=x$ on $M\setminus A$.
Thus, $g_1g_3(x)=  g_2^{-1}g_0^{-1}g_2(x)=g_3(x)$ and $g_0^{-1}g_1g_0(x)=g_3^{-1}g_1g_3(x)=x$, which proves \eqref{equ:g0g1g3} for all $x\in M$. As $G$ acts effectively on $M$, it follows that $g_0^{-1}g_1g_0=g_3^{-1}g_1g_3$.
Therefore,
\begin{align*}
d(\e,[g_1, g_0])
&=
d(\e,g_1^{-1}g_0^{-1}g_1g_0)
=
d(\e,g_1^{-1}g_3^{-1}g_1g_3)
=
d(\e,[g_1,g_3])\leq
(1+|L_{g_1^{-1}}|)d(\e,g_3)
\\&=(1+|L_{g_1^{-1}}|)d(\e,[g_0^{-1},g_2])
\leq (1+|L_{g_1^{-1}}|)
(1+|L_{g_0}|)d(\e,g_2).
\end{align*}
Taking the infimum over all $g_2$ with $g_2(A)\cap A=\emptyset$ yields
\begin{align*}
d(\e,[g_1,g_0])\leq (1+|L_{g_1^{-1}}|)(1+|L_{g_0}|)  E(A)=0.
\end{align*}
Thus, we have shown that $[g_1, g_0]$ is a non-trivial transformation with $d(\e,[g_1,g_0])=0$, which proves \ref{item:displacement1}.
This completes the proof of the equivalence of \ref{item:displacement1}, \ref{item:displacement2}, and \ref{item:displacement3}.

\ref{item:displacement2} $\implies$ \ref{item:displacement4}:
Note that $G^{0}=G$ because the only non-trivial normal subgroup of a simple group is the group itself. Now the statement follows by the triangle inequality
\begin{equation*}
d(g_0,g_1)\leq d(g_0,\e)+d(\e,g_1)=0.
\qedhere
\end{equation*}
\end{proof}

\section{Diffeomorphism groups and the modified Constantin--Lax--Majda equation}\label{sec:diffeomorphisms}

\subsection{Sobolev metrics on diffeomorphism groups}\label{sec:sob}

Let $(M,\langle\cdot,\cdot\rangle)$ be a connected Riemannian manifold of bounded geometry.\footnote{That is, the injectivity radius of $(M,\langle\cdot,\cdot\rangle)$ is positive and each iterated covariant derivative of the curvature is uniformly bounded in the metric; see \cite{Greene1978,MueNar2015} for more details. This is automatically the case if $M$ is compact or Euclidean.} 
For fixed $s \in \mathbb R$, let $\llangle \cdot,\cdot\rrangle$ be a Sobolev inner product of order $s$ on the vector space $\mathfrak X(M)$ of compactly supported vector fields, and let $\|\cdot\|$ be the corresponding norm.
We omit the exact description here and refer the interested reader to the article \cite{BBHM2013} or the more extensive references \cite{Tri1983,Tri1992,Eichhorn2007}.
For our purposes it suffices to say that a Sobolev $H^s$-norm on real-valued functions $f$ on $\mathbb R^n$ is given by
\begin{equation}
\label{Hs_norm}
\| f \|_{H^s(\mathbb R^n)}^2 = \| \mathcal F^{-1} a \mathcal F f \|_{L^2(\mathbb R^n)}^2\;,
\end{equation}
where $\mathcal F$ is the Fourier transform
and $a\in C^{\infty}(\mathbb R^n)$ is a Fourier multiplier, which satisfies for some constants $C_1,C_2>0$ that
\begin{align}
C_1(1+|\xi|^2)^{\frac{s}2}\leq a(\xi)\leq C_2 (1+|\xi|^2)^{\frac{s}2}\;,\qquad\forall \xi \in \mathbb R^n\;.
\end{align}
This definition extends to vector fields on general manifolds via charts and partitions of unity.

Let $\Diff(M)$ be the connected component of the identity in the group of all smooth compactly supported diffeomorphisms of $M$.
Then $\Diff(M)$ is a convenient Lie group with Lie algebra $\mathfrak X(M)$ \cite{KM1997}.
The right-invariant $H^s$ metric on $\Diff(M)$ is defined as
\begin{equation*}
\llangle h,k\rrangle_{\varphi} = \llangle h\circ \varphi^{-1}, k\circ \varphi^{-1} \rrangle\;,\qquad\forall\varphi \in \Diff(M), \forall h, k \in T_\varphi \Diff(M)\;,
\end{equation*}
and the corresponding geodesic distance is defined as in Section~\ref{sec:geo}.

\subsection{Bump functions with small $H^{\frac12}$ norm}

An essential ingredient in the proof of the degeneracy of the geodesic distance of the $H^{\frac12}$ metric on $\Diff(M)$ (see Theorem~\ref{thm:vanishingDiff}) is the existence of bump functions with small $H^{\frac12}$ norm. In the following we will prove a slight refinement  of \cite[Lemma 3.3]{BBHM2013}, which is used several times in the remainder of the article. The construction is illustrated in Fig.~\ref{fig:bump}, and further details can be found in \cite[Theorem 13.2]{Tri2001}.

\begin{lemma}\label{lem:bumpfunction}
There exists a sequence $(\xi_n)_{n\in\mathbb N}$ in $ C^{\infty}(\mathbb R,[0,1])$ such that
\begin{enumerate}
\item $\xi_n(x)=1$ for all $x \in [-2^{-n},2^{-n}]$ and $n\in\mathbb N$,\label{prop1}
\item $\xi_n(x)=0$ for all $x\notin [-1,1]$ and $n\in\mathbb N$, and \label{prop2}
\item $\sup_{n\in\mathbb N} n \|\xi_n\|^2_{H^{1/2}(\mathbb R)}<\infty$.\label{prop3}
\end{enumerate}
\end{lemma}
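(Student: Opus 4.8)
The plan is to build $\xi_n$ as a \emph{logarithmic staircase} assembled from $n$ dyadically rescaled copies of a single bump, and then to exploit the conformal (scale) invariance of the $H^{1/2}$ seminorm in one dimension. Fix once and for all an even function $\eta\in C^\infty(\mathbb R,[0,1])$ with $\eta(x)=1$ for $|x|\le\tfrac12$ and $\eta(x)=0$ for $|x|\ge 1$, and set $\eta_k(x):=\eta(2^{k-1}x)$, so that $\eta_k=1$ on $[-2^{-k},2^{-k}]$, $\eta_k=0$ outside $[-2^{-(k-1)},2^{-(k-1)}]$, and $\eta_k$ transitions on the two dyadic intervals of scale $2^{-k}$. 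I would then define
\[
\xi_n:=\frac1n\sum_{k=1}^n\eta_k.
\]
By construction $\xi_n\in C^\infty(\mathbb R,[0,1])$; on $[-2^{-n},2^{-n}]$ every summand equals $1$, giving property \ref{prop1}, and outside $[-1,1]$ every summand vanishes, giving property \ref{prop2}. It remains to establish the quantitative bound \ref{prop3}.

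For \ref{prop3} I would work with the Gagliardo seminorm $[f]^2:=\int_{\mathbb R}\int_{\mathbb R}|f(x)-f(y)|^2|x-y|^{-2}\,dx\,dy$, which is equivalent to the homogeneous $H^{1/2}$ seminorm, so that $\|f\|_{H^{1/2}(\mathbb R)}^2\approx\|f\|_{L^2}^2+[f]^2$. The $L^2$ term is harmless: since $\xi_n\approx k/n$ on the dyadic interval of scale $2^{-k}$, a direct estimate gives $\|\xi_n\|_{L^2}^2\lesssim n^{-2}\sum_k k^2 2^{-k}=O(n^{-2})$, which is even $o(1/n)$. Expanding the seminorm as a quadratic form, $[\xi_n]^2=n^{-2}\sum_{j,k}B(\eta_j,\eta_k)$ with $B$ the polarization of $[\cdot]^2$, the crucial feature is the one-dimensional scaling identity $[f(\lambda\,\cdot)]=[f]$ (in the critical exponent $s=\tfrac12$ the seminorm is scale invariant). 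This makes every diagonal term equal, $B(\eta_k,\eta_k)=[\eta]^2=:c$, so the diagonal contributes exactly $nc$; more generally, by the same scaling invariance $B(\eta_j,\eta_k)$ depends only on $|j-k|$.

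The heart of the matter, and the step I expect to be the main obstacle, is controlling the off-diagonal terms $B(\eta_j,\eta_k)$ for $j\ne k$. Here I would pass to the representation $B(\eta_j,\eta_k)=-C\int\int\eta_j'(x)\,\eta_k'(y)\,\log|x-y|\,dx\,dy$, valid because each $\eta_k$ is even, so $\eta_k'$ is odd with vanishing integral. The supports of $\eta_j'$ and $\eta_k'$ sit at the disjoint dyadic scales $2^{-j}$ and $2^{-k}$, where the logarithmic kernel is smooth; using the mean-zero property of $\eta_k'$ to subtract $\log|x|$ and expanding in $y/x$ (for $k>j$ one has $|y|\lesssim 2^{-k}\ll 2^{-j}\lesssim|x|$) gains one power of the scale ratio and yields the geometric bound $|B(\eta_j,\eta_k)|\le C\,2^{-|j-k|}$. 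Summing, $\sum_{j\ne k}|B(\eta_j,\eta_k)|\le Cn$, whence $[\xi_n]^2=n^{-2}(nc+O(n))=O(1/n)$ and therefore $\sup_{n}n\|\xi_n\|_{H^{1/2}(\mathbb R)}^2<\infty$, which is \ref{prop3}. The delicate point throughout is that the naive subadditivity of the seminorm gives only $[\xi_n]^2=O(1)$; extracting the sharp decay $O(1/n)$ genuinely requires the \emph{near-orthogonality} of distinct dyadic scales, i.e.\ the cancellation encoded in the mean-zero estimate above. (Alternatively, one could invoke \cite[Theorem 13.2]{Tri2001} for the existence of such logarithmic cutoffs, but the self-contained construction above makes the quantitative bound transparent.)
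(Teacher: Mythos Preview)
Your construction is identical to the paper's (the same average of $n$ dyadically rescaled bumps), and your argument for \ref{prop3} via scale invariance of the Gagliardo seminorm and near-orthogonality of dyadic scales is correct; the paper simply cites \cite[Lemma~3.3]{BBHM2013} for this estimate rather than writing it out. One minor point: your Taylor-expansion bound $|B(\eta_j,\eta_k)|\lesssim 2^{-|j-k|}$ is clean for $|j-k|\ge 2$, but at adjacent scales $|j-k|=1$ the ratio $|y/x|$ can approach $1$ and the expansion of $\log|1-y/x|$ is not uniform---there just invoke Cauchy--Schwarz ($|B(\eta_j,\eta_{j+1})|\le[\eta]^2$) and use that this single value contributes only $O(n)$ terms to the double sum.
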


\begin{figure}[h]
\centering
\begin{psfrags}
\psfrag{S0}[tc][tc]{$0$}%
\psfrag{S1}[tc][tc]{$1$}%
\psfrag{Sm1}[tc][tc]{$-1$}%
\psfrag{W0}[cr][cr]{$0$}%
\psfrag{W1}[cr][cr]{$1$}%
\psfrag{x0}[tc][tc]{$0$}%
\psfrag{x2}[tc][tc]{$0.2$}%
\psfrag{x4}[tc][tc]{$0.4$}%
\psfrag{xm2}[tc][tc]{$-0.2$}%
\psfrag{xm4}[tc][tc]{$-0.4$}%
\psfrag{y0}[cr][cr]{$0$}%
\psfrag{y1}[cr][cr]{$0.1$}%
\psfrag{y2}[cr][cr]{$0.2$}%
\psfrag{y3}[cr][cr]{$0.3$}%
\psfrag{y4}[cr][cr]{$0.4$}%
\psfrag{y5}[cr][cr]{$0.5$}%
\includegraphics[width=0.5\textwidth]{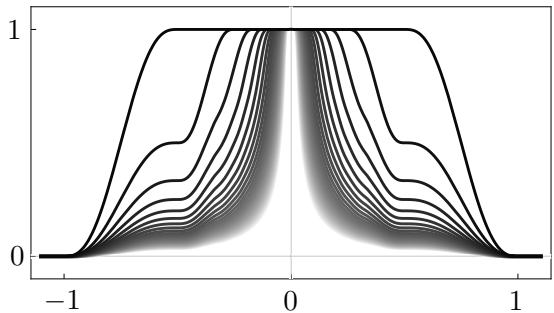}
\end{psfrags}
\caption{A sequence $(\xi_n)_{n\in\mathbb N}$ of bump functions with small $H^{1/2}$ norm constructed as in the proof of Lemma~\ref{lem:bumpfunction}.}
\label{fig:bump}
\end{figure}

\begin{proof}
Let $f\colon\mathbb R\to[0,1]$ be a smooth function with support in $[-1,1]$ such that $f(x)=1$ for all $x \in [-\frac12,\frac12]$.
For each $n \in \mathbb N$ let $\xi_n \colon \mathbb R \to [0,1]$ be given by
\begin{equation*}
\xi_n(x)=\frac{1}{n} \sum_{j=0}^{n-1}  f(2^j x)\;,\qquad\forall x \in \mathbb R.
\end{equation*}
Then $\xi_n$ obviously satisfies~\ref{prop1} and~\ref{prop2}.
By \cite[Lemma 3.3]{BBHM2013} it follows that $(\xi_n)_{n\in\mathbb N}$ satisfies~\ref{prop3}.
\end{proof}

\subsection{Vanishing geodesic distance on diffeomorphism groups}

Previous work by some of the authors \cite{BBHM2013,BBM2013} shows that the geodesic distance vanishes for $s<\frac{1}{2}$ on $\Diff(M)$ and for $s\leq\frac{1}{2}$ on $\Diff(S^1)$.
In these articles it was conjectured that the result extends to $s=\frac{1}{2}$ and general manifolds $M$. In the recent article \cite{JM2018} the vanishing geodesic distance result has been extended to metrics of order $\frac{1}{2}<s<1$ for $\operatorname{dim}(M)>1$. Thus only the case $M=\mathbb R$ and $s=\frac12$ remained open for a complete characterization of  vanishing (non-vanishing resp.) geodesic distance for Sobolev metrics on the group of diffeomorphisms of a general manifold $M$.
This gap is closed by the following theorem.
The construction is illustrated in Fig.~\ref{fig:diffeo}.

\begin{theorem}\label{thm:vanishingDiff}
Assume the setting of Section~\ref{sec:sob}.
Then the right-invariant $H^s$ metric on $\Diff(M)$ has vanishing geodesic distance if and only if 
$s\leq \frac{\operatorname{dim}(M)}{2}$ and $s<1$, i.e., in dimension $1$ 
if and only if  $s \leq \frac12$, and in dimension 
$\geq 2$ if and only if $s<1$.
\end{theorem}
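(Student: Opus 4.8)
The plan is to deduce the theorem from the displacement-energy criterion of Theorem~\ref{thm:displacement}. The group $G=\Diff(M)$ (the identity component of the compactly supported diffeomorphisms) is simple by the theorems of Thurston and Mather, it acts effectively and continuously on $M$, and every non-empty open $A\subseteq M$ is essential because $\Diff(A)$ is non-Abelian. It remains to verify hypothesis~\eqref{ass:lipschitz}: for fixed $\varphi\in G$, left translation sends a path with right-logarithmic derivative $u(t)$ to one with right-logarithmic derivative $\varphi_*u(t)=(D\varphi\circ\varphi^{-1})(u(t)\circ\varphi^{-1})$, and pushforward by a fixed compactly supported diffeomorphism is bounded on $H^s$, so $|L_\varphi|<\infty$. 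Once this is in place, Theorem~\ref{thm:displacement} together with simplicity reduces the vanishing of the geodesic distance to the single task of producing one non-empty open set $A$ with displacement energy $E(A)=0$, i.e.\ a sequence $g_k\in G$ with $g_k(A)\cap A=\emptyset$ and $d(\e,g_k)\to 0$.

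For the forward (vanishing) implication I would construct such displacing diffeomorphisms as time-one flows of vector fields assembled from the bump functions of Lemma~\ref{lem:bumpfunction}. The guiding principle is that at the critical index $s=\tfrac{d}{2}$ the homogeneous $H^{d/2}$ seminorm is scale invariant, so a single ``unit motion'' concentrated at one dyadic scale costs $O(1)$ regardless of the scale; superposing $n$ mutually near-orthogonal dyadic scales whose contributions to the displacement add coherently then realizes the motion at cost only $O(1/\sqrt n)$, which is exactly the information packaged in the bound $\sup_n n\|\xi_n\|^2_{H^{1/2}(\mathbb R)}<\infty$. In dimension $d\ge 2$ the entire relevant range is strictly subcritical, since $s<1\le\tfrac{d}{2}$, and there the extra margin makes the bump estimate comparatively soft; these cases are covered by \cite{BBHM2013} for $s<\tfrac12$ and by \cite{JM2018} for $\tfrac12\le s<1$. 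In dimension one the subcritical range $s<\tfrac12$ is likewise handled by \cite{BBHM2013,BBM2013}.

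The main obstacle is the genuinely critical, and new, case $d=1$, $s=\tfrac12$ on the non-compact line $M=\mathbb R$. The difficulty is that the metric is the full, inhomogeneous $H^{1/2}$ norm, including its $L^2$ part: by monotonicity of $E$ under inclusion one wants $A$ small, but even for a small fixed interval the naive estimate only gives an energy proportional to its size, not zero, and lowering the homogeneous seminorm of a field that pushes a fixed interval off itself forces the transition region to spread over many dyadic scales, which in turn inflates the $L^2$ contribution. This is precisely why the fixed support $[-1,1]$ of the $\xi_n$, combined with the full-norm decay $\|\xi_n\|^2_{H^{1/2}}\lesssim 1/n$, is indispensable. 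I would use the $\xi_n$ to drive a time-dependent flow that sweeps the fixed interval across its own boundary while keeping both the homogeneous and the $L^2$ parts of $\|u(t)\|_{H^{1/2}}$ under control, so that the total length $\int_0^1\|u(t)\|_{H^{1/2}}\,dt$ tends to zero; balancing the homogeneous gain against the $L^2$ cost along this flow is the crux of the argument.

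For the converse (non-vanishing) implication, when $s>\tfrac{d}{2}$ or $s\ge 1$ the geodesic distance admits a positive lower bound, and I would invoke the known positive-distance results collected in Tables~\ref{tab:1} and~\ref{tab:1a}: \cite{BBHM2013} in dimension one for $s>\tfrac12$, and \cite{MM2005,JM2018} in dimension $\ge 2$ for $s\ge 1$. Combining the two implications across the two dimension regimes yields the stated equivalence, namely that the geodesic distance vanishes exactly when $s\le\tfrac{d}{2}$ and $s<1$.
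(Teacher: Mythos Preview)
Your plan matches the paper's approach exactly: reduce to the displacement-energy criterion of Theorem~\ref{thm:displacement}, verify Lipschitz left translation and essentiality of open sets, invoke simplicity of $\Diff(M)$, cite \cite{BBHM2013,BBM2013,JM2018,MM2005} for all cases except $d=1$, $s=\tfrac12$, $M=\mathbb R$, and in that case build a displacing flow from the bump functions of Lemma~\ref{lem:bumpfunction}.

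One clarification on the ``crux'': the construction is considerably simpler than your discussion of balancing homogeneous against $L^2$ costs suggests. Take $A=(0,1)$ and set $u_n(t,x)=\xi_n(x-t)$. Because $\xi_n(0)=1$, the trajectory of the origin under $u_n$ coincides with that under the constant field $u\equiv 1$, so $\operatorname{Fl}^{u_n}_1(0)=1$; monotonicity of one-dimensional flows then forces $\operatorname{Fl}^{u_n}_1(A)$ to lie entirely to the right of $1$, hence $\operatorname{Fl}^{u_n}_1(A)\cap A=\emptyset$. Translation invariance of the $H^{1/2}$ norm gives
\[
d(\id,\operatorname{Fl}^{u_n}_1)\le\int_0^1\|\xi_n(\cdot-t)\|_{H^{1/2}(\mathbb R)}\,dt=\|\xi_n\|_{H^{1/2}(\mathbb R)}\to 0
\]
directly from Lemma~\ref{lem:bumpfunction}, which already controls the \emph{full} inhomogeneous norm (fixed support in $[-1,1]$ and values in $[0,1]$ handle the $L^2$ part trivially). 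No further balancing is required.
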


\begin{remark}
Different choices of Fourier multipliers, charts, and partitions of unity yield different but equivalent inner products and do not affect the degeneracy or non-degeneracy of the geodesic distance. 	
\end{remark}

\begin{figure}[h]
\centering
\begin{psfrags}
\psfrag{A}[cc][cc]{$A$}%
\psfrag{B}[cc][cc]{$A'$}%
\psfrag{S0}[tc][tc]{$0$}%
\psfrag{S1}[tc][tc]{$1$}%
\psfrag{S2}[tc][tc]{$2$}%
\psfrag{Sm1}[tc][tc]{$-1$}%
\psfrag{W0}[cr][cr]{$0$}%
\psfrag{W1}[cr][cr]{$1$}%
\psfrag{W2}[cr][cr]{$2$}%
\psfrag{Wm1}[cr][cr]{$-1$}%
\psfrag{x0}[tc][tc]{$0$}%
\psfrag{x2}[tc][tc]{$0.2$}%
\psfrag{x4}[tc][tc]{$0.4$}%
\psfrag{xm2}[tc][tc]{$-0.2$}%
\psfrag{xm4}[tc][tc]{$-0.4$}%
\psfrag{y0}[cr][cr]{$0$}%
\psfrag{y2}[cr][cr]{$0.2$}%
\psfrag{y4}[cr][cr]{$0.4$}%
\psfrag{ym2}[cr][cr]{$-0.2$}%
\psfrag{ym4}[cr][cr]{$-0.4$}%
\includegraphics[width=0.5\textwidth]{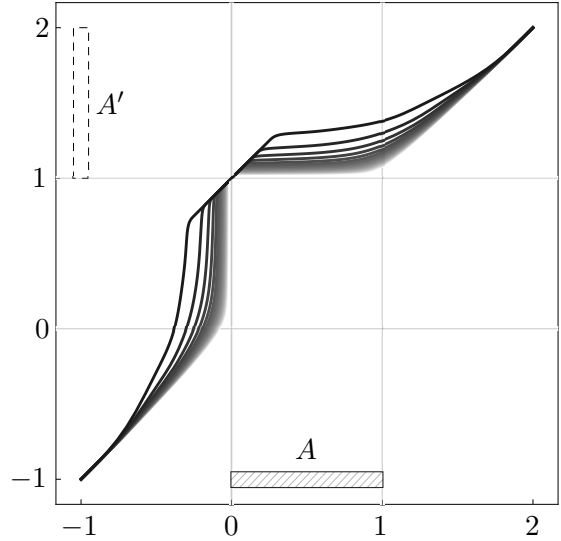}
\end{psfrags}
\caption{Illustration of the proof of Theorem~\ref{thm:vanishingDiff}: a sequence of diffeomorphisms with small $H^{1/2}$ distance to the identity; each diffeomorphism maps the set $A=(0,1)$ to some disjoint set $A'$ above the line $\{y=1\}$.}
\label{fig:diffeo}
\end{figure}

\begin{proof}[Proof of Theorem~\ref{thm:vanishingDiff}]
It suffices to show the theorem for $s=\frac12$ and $M=\mathbb R$. All other cases follow from \cite{BBHM2013,BBM2013,JM2018}. The proof is divided in three steps. For brevity, we write $\norm{h}_\varphi=\sqrt{\llangle h,h\rrangle_\varphi}$ for all $\varphi \in \Diff(M)$ and $h \in T_\varphi\Diff(M)$.

\noindent {\bf Step 1.} 
We claim that the manifold $M=\mathbb R$ contains a non-empty open set $A$ which has vanishing displacement energy with respect to the action of $\Diff(M)$.
We will prove this claim for the set $A=(0,1)$.
We start from the observation that the constant vector field $u=1$ has right translations $\operatorname{Fl}_t^{u}(x)=x+t$ as flow, and that the set $A$ does not intersect its right-translation $\operatorname{Fl}_t^{u}(A)$ at time $t=1$.
To make the energy of the time dependent vector field arbitrarily small we choose a family $(\xi_n)_{n\in\mathbb N}$ of bump functions with properties \ref{prop1}--\ref{prop3} of Lemma~\ref{lem:bumpfunction} and define for each $n\in\mathbb N$ the compactly supported time-dependent vector field
\begin{equation*}
u_n(t,x)=u(t,x). \xi_n\big(x-\operatorname{Fl}_t^{u}(0)\big)=\xi_n(x-t).
\end{equation*}
The idea behind this definition is to localize the vector field $u$ without affecting the trajectory of the point zero; see Fig.~\ref{fig:bump}.
Indeed, the trajectory of zero is given by $\operatorname{Fl}^{u}_t(0)=\operatorname{Fl}^{u_n}_t(0)$ because $\xi_n(0)=1$.
Note that the localization also corrects for the fact that right translations are not compactly supported.
Let $\varphi_n=\operatorname{Fl}^{u_n}_1 \in \Diff(\mathbb R)$.
As $\varphi_n$ preserves monotonicity, one has
\begin{equation*}
\forall x,y\in A:
\qquad
x<1=\varphi_n(0)<\varphi_n(y),
\end{equation*}
which proves that $\varphi_n(A)\cap A=\emptyset$.
Moreover, the $H^{1/2}$-distance between the identity and $\varphi_n$ tends to zero as $n$ tends to infinity:
\begin{align}
d(\id, \varphi_n)
\leq
\int_0^1 \| \xi_n(x-t)\|_{H^{1/2}(\mathbb R)} dt
=
\| \xi_n\|_{H^{1/2}(\mathbb R)}
\underset{n\to\infty}{\longrightarrow}0\;.
\end{align}
Thus, $A$ is an open set with vanishing displacement energy as claimed.

\noindent {\bf Step 2.}
Left multiplication $L_\varphi:\Diff(\mathbb R)\to\Diff(\mathbb R)$ is smooth for each $\varphi\in\Diff(\mathbb R)$.
Moreover, there is $C_\varphi>0$ such that for each vector field $X \in \mathfrak X(M)$,
\begin{align}
\label{equ:left_mult}
\| TL_{\varphi}X\|_{\varphi}
&=
\| TL_{\varphi}X\circ\varphi^{-1}\|_{\id}
=
\|d\varphi\circ\varphi^{-1}.X\circ\varphi^{-1}\|_{\id}
\leq
C_\varphi \|X\|_{\id}\;,
\end{align}
by the continuity of reparametrizations $H^{1/2}(\mathbb R) \ni X \mapsto R_{\varphi^{-1}}X\in H^{1/2}(\mathbb R)$ \cite[Lemma~B.3]{inci2013regularity} and the continuity of pointwise multiplications $H^{1/2}(\mathbb R)\ni X \mapsto d\varphi. X \in H^{1/2}(\mathbb R)$ \cite[Corollary in Section~4.2.2]{Tri1992}.
If $\psi\colon[0,1] \to \Diff(\mathbb R)$ is a smooth path and $X(t)=\partial_t\psi(t)\circ\psi(t)^{-1}$, this implies
\begin{align*}
\int_0^1 \| \partial_t(\varphi\circ\psi)\|_{\varphi\circ\psi}dt
&=
\int_0^1 \| TL_\varphi \partial_t\psi\|_{\varphi\circ\psi} dt
\\&=
\int_0^1 \| TL_\varphi X\|_{\varphi} dt
\leq
C_\varphi \int_0^1 \|X\|_{H^s} dt
=
C_\varphi \int_0^1 \|\partial_t\psi\|_\psi dt\;.
\end{align*}
Taking the infimum over all paths $\psi$ with fixed end points shows \eqref{ass:lipschitz}.
Finally we note that for each non-empty set the group $\Diff(A)$ is non-Abelian, c.f. \cite{ElPo1993}. 
 Thus, the conditions of Theorem \ref{thm:displacement} are satisfied for the group $\Diff(\mathbb R)$ with the right-invariant $H^s$ metric $\llangle\cdot,\cdot\rrangle$.
Moreover, $\Diff(\mathbb R)$ is simple by \cite[Theorem~2.1.1]{Ban1997}.
Thus, Theorem~\ref{thm:displacement} together with the result of Step~2 show that the geodesic distance vanishes identically on $\Diff(\mathbb R)$.
\end{proof}

\section{Groups of volume preserving diffeomorphisms and the SQG equation}\label{sec:volumepreserving}

Recall that the SQG equation is the Euler-Arnold equation of the right invariant $H^{-1/2}$ metric on the group of diffeomorphisms which preserve the volume (or equivalently symplectic) form of a two-manifold \cite{Was2016}. We prove in this section that the geodesic distance associated to this metric vanishes. More generally, we show that this result extends to groups of exact diffeomorphisms on higher-dimensional manifolds.

\subsection{Sobolev metrics on groups of exact diffeomorphisms}\label{sec:exact}
Let $M$ be a be a connected finite-dimensional manifold endowed with a Riemannian metric $\langle\cdot,\cdot\rangle$ of bounded geometry and a symplectic form $\omega$.
The symplectic gradient of a function $f \in C^\infty(M)$ is denoted by $\nabla^\omega f=\check\omega^{-1}df \in \mathfrak X(M)$, where $\check\omega\colon TM\to T^*M$ is the symplectic isomorphism.
A vector field is called exact if it is the symplectic gradient of a compactly supported function, and a diffeomorphism is called exact if it is generated by a time-dependent symplectic vector field, i.e.,
\begin{align*}
\mathfrak X_\ex(M)&=\{\nabla^\omega f: f\in C^\infty_c(M)\},
\\
\Diff_\ex(M)&=\{\varphi(1):\varphi \in C^\infty([0,1],\Diff(M)), \forall t \in [0,1]:\varphi'(t)\circ\varphi(t)^{-1}\in\mathfrak X_\ex(M)\}.
\end{align*}
Alternative common names are globally Hamiltonian vector fields and Hamiltonian diffeomorphisms.
Assume that $\Diff_\ex(M)$ is a convenient Lie group with Lie algebra $\mathfrak X_\ex(M)$.
This assumption is satisfied if $M$ is compact \cite{ratiu1981differentiable} or, more generally, if $M$ is connected and separable and the vector space of exact compactly supported $1$-forms is a direct summand in the space of all closed compactly supported $1$-forms \cite[Sect.~43.13]{KM1997}.
Then the $H^s$ metric on $\Diff_\ex(M)$ is defined as the unique right-invariant Riemannian metric $\llangle\cdot,\cdot\rrangle$ which satisfies
\begin{align*}
\llangle \nabla^\omega f\circ\varphi,\nabla^\omega f\circ\varphi\rrangle_\varphi
=
\llangle \nabla^\omega f,\nabla^\omega f\rrangle_{\id}
=
\|f\|_{\dot H^{s+1}(M)}^2,
\qquad
\forall \varphi\in\Diff_\ex(M),\forall f \in C^\infty_c(M).
\end{align*}
Here $\|\cdot\|_{\dot H^{s+1}(M)}$ denotes the homogeneous Sobolev (pseudo) norm of order $s+1$.
For $s=-1$ this yields the bi-invariant metric as studied by Eliashberg and Polterovich \cite{ElPo1993}.
The corresponding geodesic distance is defined as in Section~\ref{sec:geo}.
Note that $\Diff_\ex(M)$ is a subgroup of the group $\Diff_\omega(M):=\{\varphi \in \Diff(M): \varphi^*\omega=\omega\}$ of symplectic diffeomorphisms.

\subsection{Vanishing geodesic distance on exact diffeomorphisms}

The geodesic distance of the $H^s$ metric on $\Diff_\ex(M)$ is known to be non-degenerate for $s\geq 0$ by \cite{MM2005} and degenerate for $s=-1$ by \cite{ElPo1993}. The following theorem shows degeneracy for $s\leq -\frac12$.

\begin{theorem}\label{thm:maintheorem}
Assume the setting of Section~\ref{sec:exact} with $s\leq-1/2$. Then the geodesic distance of the right-invariant $H^{s}$ metric vanishes identically on the commutator sub-group $[\Diff_\ex(M),\Diff_\ex(M)]$ and, if $M$ is compact, on $\Diff_\ex(M)$.
\end{theorem}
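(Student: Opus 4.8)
The plan is to deduce the theorem from Theorem~\ref{thm:displacement} applied to $G=\Diff_\ex(M)$ acting on $M$, in complete analogy with the proof of Theorem~\ref{thm:vanishingDiff}. Two hypotheses of that theorem must be checked and then supplemented by a group-theoretic argument. First I would verify the Lipschitz condition~\eqref{ass:lipschitz}: since each $\varphi\in\Diff_\ex(M)$ is a symplectomorphism, the pushforward of a Hamiltonian vector field obeys $\varphi_*\nabla^\omega h=\nabla^\omega(h\circ\varphi^{-1})$, so left translation $L_\varphi$ acts on Hamiltonians by $h\mapsto h\circ\varphi^{-1}$, and $|L_\varphi|<\infty$ reduces to boundedness of this composition operator on $\dot H^{s+1}(M)$. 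For $s+1\ge 0$ this is the continuity of composition by a compactly supported smooth perturbation of the identity; for $s+1<0$ it follows by duality, where the hypothesis $\varphi^*\omega=\omega$ (hence unit Jacobian) makes the adjoint again a composition operator on the positive-order space $\dot H^{-(s+1)}$. Second, every non-empty open $A\subseteq M$ is essential, since $G_A=\Diff_\ex(A)$ is infinite-dimensional and non-Abelian.

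The analytic heart, and the main obstacle, is to show that every sufficiently small ball $B\subseteq M$ has vanishing displacement energy. I would work in a Darboux chart and pick a Hamiltonian $H_0$ whose flow translates $B$ off itself, just as the constant field $u=1$ displaced $A=(0,1)$ in Theorem~\ref{thm:vanishingDiff}. The field $\nabla^\omega H_0$ is then localized by multiplying $H_0$ with the multiscale bump functions $\xi_n$ of Lemma~\ref{lem:bumpfunction}, arranged so that the trajectory carrying $B$ out of itself is preserved while the support of the Hamiltonian collapses. The decisive point is the norm estimate: the metric measures the Hamiltonian in $\dot H^{s+1}$, and $s\le-\tfrac12$ gives $s+1\le\tfrac12$, so property~\ref{prop3} of Lemma~\ref{lem:bumpfunction} (and its easy analogue for orders below $\tfrac12$) forces $\|H_n\|_{\dot H^{s+1}(M)}\to 0$. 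The exact diffeomorphisms $\varphi_n=\operatorname{Fl}_1^{\nabla^\omega H_n}$ then satisfy $\varphi_n(B)\cap B=\emptyset$ with $d(\id,\varphi_n)\le\int_0^1\|H_n\|_{\dot H^{s+1}}\,dt\to 0$. Ensuring that the collapsed localization genuinely displaces the \emph{fixed} ball $B$, rather than a single point as monotonicity allowed in dimension one, is where the construction demands the most care, since in dimension $\ge 2$ there is no order structure to propagate displacement from one point to the whole ball.

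With these ingredients the hypotheses of Theorem~\ref{thm:displacement} hold, so statements \ref{item:displacement1}--\ref{item:displacement3} are equivalent and $G^0=\{g:d(\e,g)=0\}$ is a normal subgroup. Rerunning the commutator computation of Theorem~\ref{thm:displacement} with $A$ any displaceable ball shows $[G_A,G_A]\subseteq G^0$ for every such $A$. To upgrade this to the full commutator subgroup I would invoke Banyaga's fragmentation: any $a,b\in\Diff_\ex(M)$ factor into diffeomorphisms supported in displaceable balls from a fixed cover, chosen fine enough that any two overlapping balls lie in a common displaceable ball. Each commutator of two fragments is then either trivial (disjoint supports) or lies in some $[G_B,G_B]\subseteq G^0$; expanding $[a,b]$ as a product of conjugates of such fragment-commutators and using normality of $G^0$ yields $[a,b]\in G^0$, so $d$ vanishes on $[\Diff_\ex(M),\Diff_\ex(M)]$. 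Finally, when $M$ is compact $\Diff_\ex(M)$ is simple by Banyaga's theorem, so the nontrivial normal subgroup $G^0$ is all of $\Diff_\ex(M)$ (equivalently, statement~\ref{item:displacement4} of Theorem~\ref{thm:displacement} applies) and the distance vanishes identically.
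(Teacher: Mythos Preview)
Your strategy matches the paper's: verify the hypotheses of Theorem~\ref{thm:displacement} for $G=\Diff_\ex(M)$, produce an open set of zero displacement energy via a localized Hamiltonian in a Darboux chart, and then pass to the commutator subgroup. Two points deserve comment.

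\textbf{The displacement step.} You correctly flag, but do not resolve, the obstacle that in dimension $\ge 2$ one must displace a whole box rather than a point. The paper's device is specific and worth knowing: take $f(x,y)=-x\psi(x)$ so that $u=\nabla^\omega f$ is a vertical shear, and localize \emph{only in the $y$-direction along the trajectory of the line $\{y=0\}$}, i.e.\ set $f_n(t,x,y)=f(x)\,\xi_n\bigl(y-g(t,x)\bigr)$ with $g(t,x)=t(\psi(x)+x\psi'(x))$. Then the entire line $\{y=0\}$ still flows to the graph $\{y=g(1,x)\}$, which equals $\{y=1\}$ over $(-1,1)$; since the symplectomorphism maps the half-plane $\{y>0\}$ to the region above this graph, the box $A=(-1,1)\times(0,1)$ is separated from its image by the curve. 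The length estimate then factorizes via the tensor product $H^{1/2}(\mathbb R^2)=H^{1/2}(\mathbb R)\hat\otimes H^{1/2}(\mathbb R)$, isolating the single factor $\|\xi_n\|_{H^{1/2}}\to 0$. Your vaguer phrase ``arranged so that the trajectory carrying $B$ out of itself is preserved'' does not by itself guarantee displacement of the full box; the separating-curve argument is the missing idea.

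\textbf{The group-theoretic conclusion.} Here you take a genuinely different route. The paper observes that the element produced in the proof of Theorem~\ref{thm:displacement} is already a commutator $[g_1,g_0]$, hence $G^0\cap[\Diff_\ex(M),\Diff_\ex(M)]$ is a nontrivial normal subgroup of the commutator subgroup, which is simple by Banyaga's theorem (for non-compact $M$; for compact $M$ the whole group is simple). This needs only \emph{one} open set of zero displacement energy. Your argument instead shows $[G_A,G_A]\subseteq G^0$ for \emph{every} small displaceable ball $A$, then uses Banyaga's fragmentation lemma plus the commutator expansion $[a_1a_2,b]=a_2^{-1}[a_1,b]a_2\cdot[a_2,b]$ and normality of $G^0$ to conclude $[\Diff_\ex(M),\Diff_\ex(M)]\subseteq G^0$. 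This is correct and more constructive, but heavier: it requires the displacement estimate uniformly over a cover and the combinatorics of commutator words, whereas the paper's route compresses all of this into the single citation of simplicity.
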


\begin{proof}
\noindent {\bf Step 1.} We claim that the manifold $M=\mathbb R^2$ with the canonical symplectic form $\omega=dx\wedge dy$ contains a non-empty open subset $A$ with vanishing displacement energy.
To prove this claim we consider a bump function $\psi\in C^\infty(\mathbb R,[0,1])$ satisfying
for each $x \in \mathbb R$ that
\begin{equation}\label{equ:psi}
\psi(x)=\begin{cases} 1,\qquad x\in (-1,1)\\
0,\qquad x\notin (-2,2)\;.
\end{cases}
\end{equation}
and define the Hamiltonian function
\begin{equation*}f\colon\mathbb R^2\to\mathbb R,
\qquad
f(x,y)= -x\psi(x)\;.
\end{equation*}
Then the symplectic gradient and gradient flow of $f$ are given by
\begin{align*}
u(x,y)
&:=
\nabla^{\omega}f(x,y)
=
(\partial_y f(x,y),-\partial_x f(x,y))
=
(0,\psi(x)+x \psi'(x))\;,	
\\
\operatorname{Fl}_t^u(x,y)&=(x, y+t (\psi(x)+x\psi'(x))\;.
\end{align*}
Note that $\operatorname{Fl}_1^u$ is an exact diffeomorphism which maps the set $A=(-1,1)\times(0,1)$ to the disjoint set $\operatorname{Fl}_1^u(A)=(-1,1)\times(1,2)$; c.f.\@ Fig.~\ref{fig:volume_preserving}.

We will now shorten the $H^{1/2}$ length of the flow of $u$ by modifying $u$ suitably.
Let $(\xi_n)_{n\in\mathbb N}$ be a sequence of smooth bump functions with properties \ref{prop1}--\ref{prop3} of Lemma~\ref{lem:bumpfunction},
and let $g(t,x)$ describe the vertical position of the point $(x,0)$ under the flow of $u$ at time $t$, i.e.,
\begin{equation*}
g(t,x)=\operatorname{pr}_2 \operatorname{Fl}^u_t(x,0)=t \big(\psi(x)+x\psi'(x)\big)\;.
\end{equation*}
Then we define for each $n\in\mathbb N$ a time-dependent Hamiltonian function $f_n$ and vector field $u_n$ by
\begin{equation}
f_n(t,x,y)= f(x) \xi_n\big(y-g(t,x)\big)\;,
\qquad
u_n(t,x,y) = \nabla^\omega f_n(t,x,y)\;,
\end{equation}
where $\nabla^\omega=(\partial_y,-\partial_x)$ acts only in the spatial dimensions. As $u_n$ coincides with $u$ along $y=g(t,x)$, the corresponding flow satisfies
\begin{align*}
\operatorname{Fl}^{u_n}_t(x,0)=\operatorname{Fl}^u_t(x,0)=(x,g(t,x))\;.
\end{align*}
Thus, as illustrated in Fig.~\ref{fig:volume_preserving}, the line $\{(x,g(1,x)): x\in\mathbb R\}$ lies above the set $A$ and below the set $\operatorname{Fl}^{u_n}_1(A)$.
It follows that $\operatorname{Fl}^{u_n}_1(A)\cap A=\emptyset$.
The $H^{-1/2}$ length of the flow of $u_n$ can be estimated using $f_n(t,x,y)=(f\otimes\xi_n)\circ\operatorname{Fl}^{-u}_t(x,y)$ as
\begin{align*}
\int_0^1 \|u_n\|_{H^{-1/2}(\mathbb R^2)}dt
&=
\int_0^1 \|f_n\|_{H^{1/2}(\mathbb R^2)}dt
\leq
\int_0^1 \|f\otimes\xi_n\|_{H^{1/2}(\mathbb R^2)}\|R_{\operatorname{Fl}^{-u}_t}\|_{L(H^{1/2}(\mathbb R^2))} dt
\\&=
\|f\|_{H^{1/2}(\mathbb R)}\|\xi_n\|_{H^{1/2}(\mathbb R)} \int_0^1 \|R_{\operatorname{Fl}^{-u}_t}\|_{L(H^{1/2}(\mathbb R^2))} dt,
\end{align*}
where the inequality follows from the continuity of compositions by diffeomorphisms \cite[Lemma~2.7]{inci2013regularity} and the last equality from the Hilbert tensor product representation $H^{1/2}(\mathbb R^2)=H^{1/2}(\mathbb R)\hat\otimes H^{1/2}(\mathbb R)$ \cite[Theorem~2.1]{sickel2009tensor}.
Thus, the $H^{-1/2}$ distance $d(\id,\operatorname{Fl}^{u_n}_1)$ tends to zero as $n\to\infty$, which shows that the displacement energy of $A$ vanishes.

\noindent {\bf Step 2.} We claim that every symplectic manifold $M$ contains a non-empty open subset with vanishing displacement energy.
To prove the claim, note that any Darboux coordinate system defines a symplectomorphism between an open subset $U$ of $M$ and an open subset $V$ of $\mathbb R^{2d}$, where $\mathbb R^{2d}$ carries the canonical symplectic form $\sum_{i=1}^d dx_{2i-1}\wedge dx_{2i}$.
Without loss of generality $V$ is a box $(-2r,2r)^{2d}$ for some $r>0$.
Let the bump function $\psi$ and the Hamiltonian functions $f_n$ be defined as in Step~1,
choose $\epsilon>0$ such that the Hamiltonian functions $(x_1,x_2) \mapsto f_n(t,x_1/\epsilon,x_2/\epsilon)$ are supported in $(-r,r)^2$,
and define the localized Hamiltonian function
\begin{equation*}
g_n(t,x) = f_n(t,x_1/\epsilon,x_2/\epsilon) \prod_{i=3}^{2d} \psi(x_i/r)\;,
\qquad
t \in [0,1]\;, \qquad x=(x_1,\dots,x_{2d}) \in \mathbb R^{2d}\;.
\end{equation*}
Note that $g_n$ is supported in $V$ and equals $f_n(t,x_1/\epsilon,x_2/\epsilon)$ on $V/2$.
If one sets
\begin{equation*}
B = (-\epsilon,\epsilon)\times(0,\epsilon) \times (-r,r)^{2d-2}\;,
\end{equation*}
then it follows from Step~1 that the flow of the symplectic gradient $v_n(t,x)=\nabla^\omega g_n(t,x)$ satisfies $\operatorname{Fl}^{v_n}_1(B) \cap B=\emptyset$.
Moreover, the $H^{-1/2}$ length of the flow of $v_n$ can be estimated as follows: by the Hilbert tensor product representation $H^{1/2}(\mathbb R^{2d})=H^{1/2}(\mathbb R)\hat\otimes\cdots\hat\otimes H^{1/2}(\mathbb R)$ \cite[Theorem~2.1]{sickel2009tensor},
\begin{align*}
\int_0^1 \|v_n\|_{H^{-1/2}(\mathbb R^{2d})} dt
&=
\int_0^1 \|g_n\|_{H^{1/2}(\mathbb R^{2d})} dt
\\&=
\|x\mapsto\psi(x/r)\|_{H^{1/2}(\mathbb R)}^{2d-2} \int_0^1 \|(x_1,x_2)\mapsto f_n(t,x_1/\epsilon,x_2/\epsilon)\|_{H^{1/2}(\mathbb R^2)} dt,
\end{align*}
where the right-hand side tends to zero as $n \to \infty$ by Step~1.
Transferring this result from $V\subseteq\mathbb R^{2d}$ back to $U\subseteq M$ using the Darboux coordinates proves the claim.

\noindent {\bf Step 3.}
The assumptions of Theorem \ref{thm:displacement} are satisfied for the group $\Diff_\ex(M)$ with the right-invariant $H^s$ metric $\llangle\cdot,\cdot\rrangle$.
This can be verified as in the proof of Theorem~\ref{thm:vanishingDiff}.
As any non-empty open set is essential for the action of $\Diff_\ex(M)$ \cite[Lemma~2.1.12 and Theorem~2.3.1]{Ban1997}, the implication \ref{item:displacement3} $\Rightarrow$ \ref{item:displacement1} of Theorem~\ref{thm:displacement} shows the existence of a non-trivial $\varphi \in \Diff_\ex(M)$ with vanishing distance to the identity.
The proof of Theorem~\ref{thm:displacement} actually reveals the stronger statement that $\varphi$ belongs to the commutator subgroup $[\Diff_\ex(M),\Diff_\ex(M)]$.
For compact $M$ the group $\Diff_\ex(M)$ is simple \cite[Theorem 4.3.1 and Remark~4.2.3]{Ban1997}, and for non-compact $M$ the commutator subgroup $[\Diff_\ex(M),\Diff_\ex(M)]$ is simple \cite[Theorem 4.3.3]{Ban1997}.
Thus, the geodesic distance vanishes identically on these respective groups by Theorem~\ref{thm:displacement}.\ref{item:displacement4}.
\end{proof}

\begin{figure}
\centering
\begin{psfrags}
\psfrag{S0}[tc][tc]{$0$}%
\psfrag{S1}[tc][tc]{$1$}%
\psfrag{S2}[tc][tc]{$2$}%
\psfrag{Sm1}[tc][tc]{$-1$}%
\psfrag{Sm2}[tc][tc]{$-2$}%
\psfrag{W0}[cr][cr]{$0$}%
\psfrag{W1}[cr][cr]{$1$}%
\psfrag{W2}[cr][cr]{$2$}%
\psfrag{Wm1}[cr][cr]{$-1$}%
\psfrag{Wm2}[cr][cr]{$-2$}%
\psfrag{Wm3}[cr][cr]{$-3$}%
\psfrag{x0}[tc][tc]{$0$}%
\psfrag{x11}[tc][tc]{$1$}%
\psfrag{x2}[tc][tc]{$0.2$}%
\psfrag{x4}[tc][tc]{$0.4$}%
\psfrag{x6}[tc][tc]{$0.6$}%
\psfrag{x8}[tc][tc]{$0.8$}%
\psfrag{y0}[cr][cr]{$0$}%
\psfrag{y11}[cr][cr]{$1$}%
\psfrag{y2}[cr][cr]{$0.2$}%
\psfrag{y4}[cr][cr]{$0.4$}%
\psfrag{y6}[cr][cr]{$0.6$}%
\psfrag{y8}[cr][cr]{$0.8$}%
\setlength{\unitlength}{0.5\textwidth}
\begin{picture}(1,1)
\put(0,0){\includegraphics[width=0.5\textwidth]{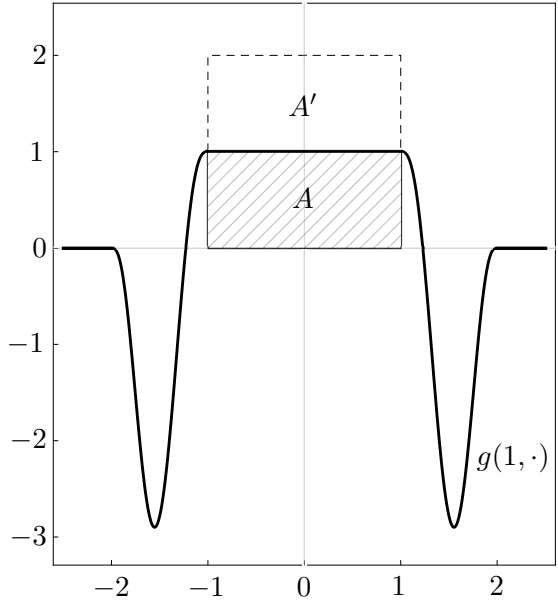}}
\put(0.5, 0.82){\makebox(0,0)[c]{$A'$}}
\put(0.5, 0.67){\makebox(0,0)[c]{$A$}}
\put(0.78, 0.25){\makebox(0,0)[l]{$g(1,\cdot)$}}
\end{picture}
\end{psfrags}
\caption{Illustration of the proof of Theorem~\ref{thm:vanishingDiff}. The exact diffeomorphism $\operatorname{Fl}^u_1$ maps the set $A$ to $A'$ and the line $\{y=0\}$ to $\{y=g(1,x)\}$. The exact diffeomorphisms $\operatorname{Fl}^{u_n}_1$ also map the set $A$ to some set above the line $\{y=g(1,x)\}$ and additionally have short $H^{1/2}$ distance to the identity.}
\label{fig:volume_preserving}
\end{figure}

\subsection{Degenerate geodesic distance on volume preserving diffeomorphisms}
On two dimensional manifolds, volume forms coincide with symplectic forms.
This allows one to apply Theorem~\ref{thm:vanishingDiff} to groups of volume preserving diffeomorphisms, which are of particular interest because several prominent PDEs are Euler--Arnold (i.e., geodesic) equations of Sobolev $H^s$ metrics thereon: for $s=0$ one obtains Euler's equation for the motion of an ideal fluid \cite{Arn1966}, and for $s=-\frac{1}{2}$ one obtains the SQG-equation \cite{Was2016}.
The following corollary to Theorem~\ref{thm:vanishingDiff} states that the SQG equation corresponds to a degenerate Riemannian metric.
Note that this is in stark contrast to Euler's equation, which corresponds to a non-degenerate metric.

\begin{corollary}\label{cor:volume}
Let $(M,g)$ be a two-dimensional orientable Riemannian manifold, and let $\mu=\vol(g)$ be the Riemannian volume form.
Then the geodesic distance of the right-invariant $H^{-1/2}$ metric on the group $\Diff_\mu(M):=\{\varphi\in\Diff(M):\varphi^*\mu=\mu\}$ of volume preserving diffeomorphisms is degenerate.
\end{corollary}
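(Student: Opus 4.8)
The plan is to exhibit $\Diff_\mu(M)$ as an overgroup of the group $\Diff_\ex(M)$ of Hamiltonian diffeomorphisms, to which Theorem~\ref{thm:maintheorem} already applies at $s=-\tfrac12$, and then to transfer vanishing distance from the subgroup to the overgroup by a restriction-of-metric argument.

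First I would use that $M$ is a two-dimensional orientable manifold to identify the volume form with a symplectic form: on an orientable surface the area form $\mu$ is a nowhere-vanishing $2$-form and is automatically closed, hence symplectic, so setting $\omega:=\mu$ yields $\Diff_\mu(M)=\Diff_\omega(M)$. Since every Hamiltonian diffeomorphism preserves $\omega=\mu$, the group $\Diff_\ex(M)$ is a subgroup of $\Diff_\mu(M)$, and its Lie algebra $\mathfrak X_\ex(M)$ of symplectic gradients $\nabla^\omega f$ sits inside the algebra of compactly supported divergence-free fields underlying the SQG metric. On exact fields the two metrics coincide: a short Fourier computation shows that the skew gradient contributes exactly one factor of $|\xi|$, so $\|\nabla^\omega f\|_{\dot H^{-1/2}}=\|f\|_{\dot H^{1/2}}$, which is precisely the inner product defining the right-invariant $H^{-1/2}$ metric on $\Diff_\ex(M)$ in Section~\ref{sec:exact}. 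Hence the SQG metric on $\Diff_\mu(M)$ restricts on $\Diff_\ex(M)$ to the metric of Theorem~\ref{thm:maintheorem}.

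Second I would invoke the monotonicity of geodesic distance under passage to a subgroup carrying the restricted metric: for $g_1,g_2\in\Diff_\ex(M)$ every admissible path in $\Diff_\ex(M)$ is also an admissible path in $\Diff_\mu(M)$ of the same length, so $d_{\Diff_\mu}(g_1,g_2)\leq d_{\Diff_\ex}(g_1,g_2)$. By Theorem~\ref{thm:maintheorem} with $s=-\tfrac12$ the $H^{-1/2}$ geodesic distance vanishes identically on the commutator subgroup $[\Diff_\ex(M),\Diff_\ex(M)]$, which is non-trivial since its proof constructs a non-trivial commutator therein. Choosing distinct $g_1\neq g_2$ in this subgroup gives $d_{\Diff_\mu}(g_1,g_2)\leq d_{\Diff_\ex}(g_1,g_2)=0$, and as $g_1,g_2$ are distinct elements of $\Diff_\mu(M)$ this proves degeneracy.

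The one step requiring care is the identification of the two metrics on the common subalgebra, that is, checking that the $H^{-1/2}$ norm on divergence-free fields restricts to the homogeneous $\dot H^{1/2}$ norm on Hamiltonians used in Theorem~\ref{thm:maintheorem}. Since degeneracy of the geodesic distance is unaffected by replacing an inner product with an equivalent one, it in fact suffices that these norms agree up to equivalence on compactly supported fields rather than exactly; the restriction-monotonicity estimate then holds up to a multiplicative constant, which is still enough to conclude $d_{\Diff_\mu}=0$ from $d_{\Diff_\ex}=0$. The remaining ingredients, namely the subgroup inclusion, non-triviality of the commutator subgroup, and the length comparison for subgroup paths, are formal.
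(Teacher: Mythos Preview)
Your argument is correct and follows the same overall strategy as the paper: identify $\mu$ with a symplectic form, embed $\Diff_\ex(M)$ in $\Diff_\mu(M)$ with the metrics matching on exact fields via $\|\nabla^\omega f\|_{H^{-1/2}}=\|f\|_{H^{1/2}}$, and transfer vanishing from the smaller group to the larger one. The only difference is in the transfer step. You invoke the full conclusion of Theorem~\ref{thm:maintheorem} and pass to $\Diff_\mu(M)$ via subgroup monotonicity of geodesic distance; the paper instead extracts only Step~2 of that theorem's proof (the construction of an open set with vanishing displacement energy under $\Diff_\ex(M)$), observes this set also has vanishing displacement energy for the overgroup, and then applies Theorem~\ref{thm:displacement} directly with $G=\Diff_\mu(M)$. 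The paper's route buys one thing it makes explicit: by using only the displacement-energy construction rather than the full Theorem~\ref{thm:maintheorem}, it sidesteps the standing assumption from Section~\ref{sec:exact} that $\Diff_\ex(M)$ be a convenient Lie group, an assumption your direct invocation inherits and which is not automatic for arbitrary noncompact surfaces.
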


\begin{proof}
As $M$ is two-dimensional, the volume form $\mu$ is also a symplectic form.
Step~2 in the proof of Theorem~\ref{thm:maintheorem} shows that $M$ contains an open set $A$ with vanishing displacement energy with respect to the action of $\Diff_\ex(M)$.
Note that the assumption that $\Diff_\ex(M)$ is a convenient Lie group is not needed here.
The set $A$ has vanishing displacement energy also with respect to the action of $\Diff_\mu(M)$ because $\Diff_\ex(M)$ is contained in $\Diff_\mu(M)$ and because $\|\nabla^\omega f\|_{H^{-1/2}(\mathbb R^2)}=\|f\|_{H^{1/2}}$ for each $f \in C^\infty_c(M)$.
Thus, the geodesic distance on $\Diff_\mu(M)$ is degenerate by the implication \ref{item:displacement3} $\Rightarrow$ \ref{item:displacement1} of Theorem~\ref{thm:displacement} applied to $G=\Diff_\mu(M)$.
\end{proof}

\section{Open problems and conjectures}\label{sec:conjecture2}
\subsection{Degeneracy of the geodesic distance on diffeomorphism groups}
The present article and the recent article \cite{JM2018} by Jerrard and Maor give a complete characterization of the geodesic distance of right invariant $H^s$ metrics on diffeomorphism groups (see Theorem~\ref{thm:vanishingDiff}). 
Jerrard and Maor consider not only metrics of type $H^s=W^{s,2}$, but also of type $W^{s,p}$ for general $p \in [1,\infty)$. 
In \cite{JM2018} they obtained a nearly complete characterization for this class of metrics; only the behavior at the critical Sobolev index $s=\frac{\dim{M}}{p}$ remained open. 
One of the main difficulties in their analysis was to control the end points of certain paths in $\Diff(M)$ with arbitrarily short length. 
Such precise control of the end points is not needed if Theorem~\ref{thm:displacement} is invoked. 
This simplification was used in their follow-up article \cite{JM2019} to complete the characterization of vanishing (non-vanishing, resp.) geodesic distance.

For symplectomorphism groups much less is known. The results of this article show that the geodesic distance of $H^s$ metrics is degenerate for $s\leq-\frac12$ by Theorem~\ref{thm:maintheorem} and non-degenerate for $s\geq 0$ by \cite[Theorem~5.7]{MM2005}, but the case $-\frac12< s< 0$ remains open. 
Sobolev metrics of type $W^{s,p}$ have been studied only for $s=-1$, and there the geodesic distance vanishes if and only if $p<\infty$. The case $s=-1$ and $p=\infty$ corresponds to the Hofer metric, which is known to have non-vanishing geodesic distance \cite{hofer1990topological}.
An interesting question is if the critical index is independent of the dimension of $M$. 
This is certainly compatible with existing results, and might be due to the higher rigidity of the group of symplectomorphisms. This would suggest that the geodesic distance of the right invariant $W^{s,p}$ metric with  $p\in [1,\infty)$ is degenerate if and only if $s\leq \frac1p-1$.

\subsection{Relation to Fredholmness of the exponential map}

By the monotonicity of the $H^s$ distance in $s$, there is a critical threshold $s^*_{\text{dist}}$ such that the $H^s$ geodesic distance is degenerate below and non-degenerate above the threshold: 
\begin{equation*}
s^*_{\text{dist}} = \sup \{s\in \mathbb R: d_s(g_1,g_2) = 0, \forall g_1,g_2\}\;.
\end{equation*}
Similarly, smoothness of geodesic spray depends monotonically on $s$ \cite{BEK2014,EK2014}. Thus, there is a again a critical index 
\begin{equation*}
s^*_{\text{smooth}} = \inf \{s\in \mathbb R: \operatorname{exp} \text{ is } C^1\}\;,
\end{equation*}
where $\exp$ denotes the Riemannian exponential map of the $H^s$-metric.
Moreover, the Fredholm property of the exponential map is monotonic in $s$, since Fredholmness is generally obtained by compactness of the operator $v\mapsto \ad_v^{\star}u_0$ describing the Euler-Arnold equation on diffeomorphism groups. Compactness of any such operator implies compactness of all operators of higher order \cite{MP2010}. Thus, there is again a critical threshold $s^*_{\text{Fred}}$ for Fredholmness, 
$$ s^*_{\text{Fred}} = \inf \{s\in \mathbb{R}: v\mapsto \ad_v^{\star}u_0 \text{ is compact}\}\;.$$
We conjecture that these  thresholds are connected to each other as follows:
\begin{conjecture}\label{con:fredholm}
On any group of diffeomorphisms the critical thresholds satisfy the relation
\begin{equation}
s^*_{\operatorname{smooth}}=s^*_{\operatorname{Fred}}\leq s^*_{\operatorname{dist}}.
\end{equation}
\end{conjecture}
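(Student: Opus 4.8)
The plan is to treat the two assertions $s^*_{\operatorname{smooth}}=s^*_{\operatorname{Fred}}$ and $s^*_{\operatorname{Fred}}\le s^*_{\operatorname{dist}}$ separately, with the common thread being the effective order of the right-trivialized quadratic term $v\mapsto\ad_v^\star u_0$ viewed as a pseudodifferential-type operator on the Hilbert completion of the Lie algebra. For an $H^s$ metric the inertia operator $A$ has order $2s$ and $\ad_v^\top$ is first order, so $\ad_v^\star u_0=A^{-1}\ad_v^\top(Au_0)$ naively loses one derivative; the decisive point is that the commutator structure cancels the top-order symbol, leaving an operator of some effective order $\alpha(s)$. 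I would first make this symbol computation precise, extending the one-dimensional computations behind the tables in the introduction to a general manifold $M$, and verify that $\alpha(s)$ is continuous and strictly decreasing in $s$ (stronger metric, more regularizing $A^{-1}$, lower order), crossing zero at a single critical index $s_c$.

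Granting this, the first identity should follow from a clean dichotomy. Smoothness of the geodesic spray is equivalent to the bilinear map $B(u,v)=\ad_u^\star v$ being bounded into the ambient $H^s$ space with no loss of derivatives, that is $\alpha(s)\le 0$; whereas Fredholmness of the exponential map is, by the mechanism of \cite{MP2010}, equivalent to $v\mapsto\ad_v^\star u_0$ being \emph{compact} on $H^s$, that is $\alpha(s)<0$. Since boundedness holds exactly for $s\ge s_c$ and compactness exactly for $s>s_c$, both infima equal $s_c$, giving $s^*_{\operatorname{smooth}}=s^*_{\operatorname{Fred}}=s_c$. This is consistent with the tables, where at the endpoint $s=s_c$ the spray is already smooth while Fredholmness still fails: precisely the bounded-but-not-compact borderline case.

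For the inequality $s^*_{\operatorname{Fred}}\le s^*_{\operatorname{dist}}$ I would show $s^*_{\operatorname{dist}}\ge s_c$, i.e.\ that the geodesic distance vanishes throughout the non-compact regime $s<s_c$. The strategy is to feed the displacement-energy criterion of Theorem~\ref{thm:displacement} with a general version of the localization used in Theorem~\ref{thm:vanishingDiff} and Theorem~\ref{thm:maintheorem}: fix a vector field displacing a small open set $A$ off itself, and cut it off with a sequence $\xi_n$ whose relevant critical-order norm tends to zero. The existence of such a collapsing sequence is governed by the condition $\alpha(s)>0$, which is the same failure of compactness that obstructs Fredholmness; it forces the displacement energy of $A$ to vanish, and Theorem~\ref{thm:displacement} then yields $d_s\equiv 0$. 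This would identify the lower endpoint of the vanishing regime with $s_c=s^*_{\operatorname{Fred}}$.

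The hard part will be this last step in full generality. The explicit constructions in this paper and in \cite{BBHM2013,JM2018} succeed because one can exhibit bump functions of small critical-order norm by hand; converting the abstract statement that $v\mapsto\ad_v^\star u_0$ is non-compact into the concrete existence of arbitrarily short displacing flows requires a quantitative bridge between the symbol order $\alpha(s)$ and the infimal length of localized displacing paths that is not available beyond model geometries. A secondary obstacle is that on a general diffeomorphism group $\alpha(s)$ need not be an affine function of $s$ and the symbol cancellation in $\ad_v^\star u_0$ may depend on the reference momentum $u_0$; establishing that all three thresholds are genuinely independent of such choices, so that the suprema and infima in their definitions are attained at a common $s_c$, is itself a nontrivial uniformity statement that I expect to be the crux of a complete proof.
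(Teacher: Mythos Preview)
This statement is labeled \emph{Conjecture} in the paper, not a theorem, and the paper offers no proof. The surrounding text merely observes that the relation $s^*_{\operatorname{smooth}}=s^*_{\operatorname{Fred}}\le s^*_{\operatorname{dist}}$ is consistent with all cases computed in Tables~\ref{tab:1}--\ref{tab:2} and cautions that behavior \emph{at} the critical index can differ from property to property. There is therefore no ``paper's own proof'' to compare your proposal against.

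As to the proposal itself: it is a reasonable heuristic outline, and you correctly identify the mechanism behind $s^*_{\operatorname{smooth}}=s^*_{\operatorname{Fred}}$ (bounded versus compact bilinear term, sharing the same threshold as an infimum). But your argument for $s^*_{\operatorname{Fred}}\le s^*_{\operatorname{dist}}$ has a genuine gap that you yourself flag at the end: you assert that non-compactness of $v\mapsto\ad_v^\star u_0$ (i.e.\ $\alpha(s)>0$) should \emph{produce} arbitrarily short displacing flows, but no such implication is known. The constructions in Theorems~\ref{thm:vanishingDiff} and~\ref{thm:maintheorem} and in \cite{BBHM2013,JM2018} are explicit and rely on hand-built bump sequences at specific Sobolev indices; there is no general principle extracting such sequences from an abstract failure of compactness. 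Indeed, for volume-preserving diffeomorphisms on surfaces the paper leaves the range $-\tfrac12<s<0$ open precisely because no displacing construction is known there, even though Fredholmness already fails. Until that bridge is built, the inequality remains a conjecture.
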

This conjecture holds true in all known cases, including groups of diffeomorphisms, volume preserving diffeomorphisms and symplectomorphisms. It is, however, important to note that the behavior \emph{at} the critical index may vary from property to property. For example, for 1D diffeomorphisms at $s=\frac12$, Fredholmness is false, whereas smoothness of the exponential map is true \cite{BKP2016}. Similar statements apply to 3D fluids at $s=0$ \cite{MP2010}.

If Conjecture~\ref{con:fredholm} is true, then the Fredhomness results of \cite{MP2010} would imply the following complete characterization for volume preserving diffeomorphisms on manifolds of dimension $\geq 3$:
\begin{conjecture}
The geodesic distance of the right invariant $H^s$ metric on $\Diff_\mu(M)$ in dimension $\geq 3$ vanishes if and only if $s<0$.
\end{conjecture}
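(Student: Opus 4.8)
The plan is to obtain this corollary as a soft reduction to Theorem~\ref{thm:maintheorem}, since in dimension two the substantive construction has already been carried out there. First I would observe that on a two-dimensional orientable manifold the Riemannian volume form $\mu$ is a non-degenerate closed two-form, hence a symplectic form, so the framework of Section~\ref{sec:exact} applies verbatim with $\omega=\mu$. As the symplectic gradient $\nabla^\omega f$ of any compactly supported $f$ is divergence-free with respect to $\mu$, the group $\Diff_\ex(M)$ of exact diffeomorphisms sits inside the volume-preserving group $\Diff_\mu(M)$ as a subgroup, recovering the inclusion $\Diff_\ex(M)\subseteq\Diff_\omega(M)=\Diff_\mu(M)$.

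Second, I would invoke Step~2 of the proof of Theorem~\ref{thm:maintheorem}, which produces a non-empty open set $A\subseteq M$ whose displacement energy with respect to $\Diff_\ex(M)$ vanishes. The displacing maps there are time-one flows of localized Hamiltonian vector fields transported through a Darboux chart, with $H^{-1/2}$-lengths driven to zero by the small-$H^{1/2}$-norm bump functions of Lemma~\ref{lem:bumpfunction}. It is worth stressing that this step constructs explicit flows and never uses the convenient Lie group structure of $\Diff_\ex(M)$, so it remains available even when $M$ is non-compact.

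Third, I would transfer vanishing displacement energy from $\Diff_\ex(M)$ to $\Diff_\mu(M)$. The crucial compatibility is that the right-invariant $H^{-1/2}$ metric on $\Diff_\mu(M)$ restricts on exact vector fields to the metric used for $\Diff_\ex(M)$, since $\|\nabla^\omega f\|_{H^{-1/2}(\mathbb R^2)}=\|f\|_{H^{1/2}}$. Because the displacing flows of Step~2 are supported inside a single chart and Hamiltonian there, the possible presence of non-exact harmonic divergence-free fields on a topologically nontrivial $M$ is irrelevant: each such flow is simultaneously a path in $\Diff_\mu(M)$ of the same length. Combined with the inclusion $\Diff_\ex(M)\subseteq\Diff_\mu(M)$, which only enlarges the family of admissible displacing elements, this forces the $\Diff_\mu(M)$-displacement energy of $A$ to vanish as well.

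Finally, I would apply the implication \ref{item:displacement3}$\Rightarrow$\ref{item:displacement1} of Theorem~\ref{thm:displacement} to $G=\Diff_\mu(M)$, yielding a non-trivial $\varphi\in\Diff_\mu(M)$ with $d(\id,\varphi)=0$, which is precisely the asserted degeneracy; note that no simplicity of $\Diff_\mu(M)$ is required, as degeneracy only needs a single non-trivial null-distance element. The genuine obstacle I expect is verifying the hypotheses of Theorem~\ref{thm:displacement} for the full group $\Diff_\mu(M)$ rather than $\Diff_\ex(M)$: the Lipschitz bound~\eqref{ass:lipschitz} on left translations, which I would establish as in Step~2 of Theorem~\ref{thm:vanishingDiff} from continuity of reparametrization and pointwise multiplication on $H^{-1/2}$, and the essentiality of open sets, i.e. the non-Abelianness of the subgroup of volume-preserving diffeomorphisms supported in $A$.
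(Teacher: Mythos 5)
There is a fundamental mismatch here: the statement you were asked to prove is an \emph{open conjecture} --- the paper offers no proof of it, only a conditional derivation (it would follow from Conjecture~\ref{con:fredholm} combined with the Fredholmness results of \cite{MP2010} and the nonvanishing result of \cite{MM2005} for $s\geq 0$). Your proposal does not engage with this statement at all; instead it reconstructs, essentially verbatim, the paper's proof of Corollary~\ref{cor:volume}, which concerns the $H^{-1/2}$ metric on $\Diff_\mu(M)$ for a \emph{two-dimensional} $M$. Your very first step --- identifying the volume form $\mu$ with a symplectic form $\omega$ --- is exactly what fails in dimension $\geq 3$: in odd dimensions $M$ carries no symplectic form whatsoever, and in even dimensions $\geq 4$ the volume form is a top-degree form rather than a $2$-form, and $\Diff_\omega(M)$ is a proper subgroup of $\Diff_\mu(M)$ whose Lie algebra (Hamiltonian fields) is far from exhausting the divergence-free fields. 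Consequently the entire mechanism you rely on --- writing displacing flows as flows of Hamiltonian functions $f_n$ and converting $\|u_n\|_{H^{-1/2}}$ into $\|f_n\|_{H^{1/2}}$ so that Lemma~\ref{lem:bumpfunction} applies --- is unavailable, and Step~2 of Theorem~\ref{thm:maintheorem} cannot be ``transported'' to the higher-dimensional volume-preserving setting.

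Even if one granted a displacement construction by divergence-free fields in dimension $\geq 3$, your argument would still fall short of the conjecture in two ways. First, the bump-function estimate of Lemma~\ref{lem:bumpfunction} controls only the $H^{1/2}$ norm of the Hamiltonians, i.e.\ the $H^{s}$ length for $s\leq -\tfrac12$; the conjecture asserts vanishing for \emph{all} $s<0$, and the paper explicitly records (Table~\ref{tab:2} and Section~\ref{sec:conjecture2}) that even in 2D the range $-\tfrac12<s<0$ is open, so a genuinely new construction of short displacing paths would be required there. Second, the conjecture claims identically vanishing distance, whereas the endpoint of your argument (the implication \ref{item:displacement3}~$\Rightarrow$~\ref{item:displacement1} of Theorem~\ref{thm:displacement}) yields only degeneracy, i.e.\ one non-trivial $\varphi$ with $d(\id,\varphi)=0$; upgrading to vanishing via \ref{item:displacement4} would require simplicity (or a suitable normal-subgroup analysis) of $\Diff_\mu(M)$, which you do not address. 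You also never treat the ``only if'' direction, though that part at least is covered by \cite{MM2005} for $s\geq 0$. In short: you have correctly re-proved Corollary~\ref{cor:volume}, but the stated conjecture remains open, and your method contains no idea that addresses what makes it open.
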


\subsection{Relations to longtime existence of solutions to the geodesic equation}

Properties of the Riemannian distance seem associated to global existence phenomena of the corresponding geodesic and Euler--Arnold equations. In finite dimensions this is expected: the Hopf--Rinow theorem says that geodesics extend for all time if and only if the Riemannian distance function gives a complete metric space. In infinite dimensions this is much less well-understood, and so far there exist no formal results in this direction. We see however several aspects of ``borderline'' behavior depending on the smoothness parameter $s$: the transition between global conservative (weak) solutions and nonunique shock solutions; between having a nonsmooth exponential map and having a smooth one; and between vanishing geodesic distance and positive geodesic distance. Many of these transitions seem to happen at the same value of $s$, based on the limited information we still have about the complete picture.

For example, in one space dimension, the Euler--Arnold equations include the Hunter--Saxton and Camassa--Holm equations at $s=1$, the Wunsch or modified Constantin--Lax--Majda equation at $s=\frac12$, and the inviscid Burgers' equation at $s=0$ (see Table~\ref{tab:1} for an overview and references). All of these have solutions that blow up in finite time. In the case $s=1$, solutions of the Hunter--Saxton~\cite{Len2007a,BBM2014a} and Camassa--Holm~\cite{lee2018global, McK2004} equations may be continued using a geometric transformation in the space of smooth maps. Moreover, the exponential map is smooth, and the Riemannian distance is positive. On the other hand, in the case $s=0$, solutions of the inviscid Burgers' equation exhibit genuine shocks and their flows must lose continuity (and in particular smoothness) as well as dissipating energy; in addition the exponential map is non-smooth and the Riemannian distance vanishes. In between lies the Wunsch equation at $s=1/2$. Here the exponential map is smooth, the Riemannian distance vanishes, and all geodesics end in finite time. It is not known whether geodesics can be continued in a slightly larger space of smooth or continuous nondecreasing functions, or whether they must leave the space of continuous functions entirely. 

In future work we aim for a better understanding of the blowup properties of the borderline case and connecting them to the results of this article.

\subsection*{Acknowledgments}
We would like to thank Martins Bruveris, Stefan Haller, Robert Jerrard, Cy Maor, Peter Michor, and Gerard Misio{\l}ek  for helpful comments and discussions during the preparation of this manuscript.


\def\cprime{$'$}

\end{document}